 \definecolor{darkblue}{RGB}{0,0,160}
\DeclareSymbolFont{usualmathcal}{OMS}{cmsy}{m}{n}
\DeclareSymbolFontAlphabet{\mathcal}{usualmathcal}
\newtheorem{theorem}{Theorem}[section]
\newtheorem{lemma}[theorem]{Lemma}
\newtheorem{conjecture}[theorem]{Conjecture}
\newtheorem*{definition*}{Definition}
\begin{document}

\title{Radial projection theorems in finite spaces}
\author{Ben Lund\and Thang Pham\and Vu Thi Huong Thu}

\address{Discrete Mathematics Group, Institute for Basic Science (IBS)}
\email{benlund@ibs.re.kr}
\address{Hanoi University of Science, Vietnam National University}
\email{thangpham.math@vnu.edu.vn}

\address{Hanoi University of Science, Vietnam National University}
\email{vuthihuongthu\_t64@hus.edu.vn}

\date{}
\maketitle
\begin{abstract}
Motivated by recent results on radial projections and applications to the celebrated Falconer distance problem, we study radial projections in the setting of finite fields. More precisely, we extend results due to Mattila and Orponen (2016), Orponen (2018), and Liu (2020) to finite spaces. In some cases, our results are stronger than the corresponding results in the continuous setting.
In particular, we solve the finite field analog of a conjecture due to Liu and Orponen on the exceptional set of radial projections of a set of dimension between $d-2$ and $d-1$. 
\end{abstract}
\section{Introduction}

For any point $y \in \mathbb{R}^d$, the radial projection $\pi^y:\mathbb{R}^d \setminus \{y\} \rightarrow S^{d-1}$ is
\[\pi^y(x) = \frac{x-y}{|x-y|}. \]
For $E \subset \mathbb{R}^d$, the radial projection of $E$ from $y$ is
\[\pi^y(E) = \{\pi^y(x) : x \in E \setminus \{y\}\}.\]
There are many results and open problems related to the general question: What can be said about the radial projections $\{\pi^y(E): y \in \mathbb{R}^d\}$, given a bound on the Hausdorff dimension $\dim_H(E)$ of $E$?

Here we consider finite analogs to various results on this general question.
Over a general field $\mathbb{F}$, for a given point $y \in \mathbb{F}^d$, the radial projection $\pi^y$ from $y$ maps a point $x \neq y$ to the line that contains $x$ and $y$.
Usually we work over a finite field $\mathbb{F}_q$, and in this setting the standard analog to $\dim_H(E)$ is $\log_q(|E|)$.


One reason to be interested in finite analogs is that working in the finite field model can help to understand the behavior of the corresponding problems in the continuous setting.
A second reason that we are interested in radial projections of point sets is because of a connection to the Falconer distance problem.
Recent work on the Falconer distance problem \cite{Du3, alex-fal} relies on a radial projection theorem due to Orponen \cite{o1}.
We hope that a better understanding of radial projections of finite point sets may also lead to progress on finite versions of the Falconer distance problem - see the last paragraph for a more detailed discussion.

Mattila and Orponen \cite{MO16} proved that for any Borel set $E\subset \mathbb{R}^d$, if $\dim_H(E)>d-1$, then
\begin{equation}\label{conti-1}\dim_H\{y\in \mathbb{R}^d\colon \mathcal{H}^{d-1}(\pi^y(E))=0\}\le 2(d-1)-\dim_H(E),\end{equation}
where $\mathcal{H}^{d}$ denotes the $d$-dimensional Hausdorff measure.
Orponen showed \cite{o1} that this is sharp in the sense that for any $\alpha\in (d-1, d]$, there exists a Borel set $E\subset \mathbb{R}^d$ of Hausdorff dimension $\alpha$ such that $\dim_H\{y: \mathcal{H}^{d-1}(\pi^y(E)) = 0\} = 2(d-1) - \alpha$.

Our first result is an analogous bound for sets in finite space.
\begin{theorem}\label{th:largeESC}
Let $E \subset \mathbb{F}_q^d$ and let $M$ be a positive integer, with $|E| \geq 6q^{d-1}$ and $M \leq 4^{-1}q^{d-1}$.
Then,
\begin{equation} \label{eq:largeESC}
\#\{y\in \mathbb{F}_q^d\colon |\pi^y(E)|\leq M\} <  12q^{d-1}M|E|^{-1}.\end{equation}
\end{theorem}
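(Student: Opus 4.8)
The plan is to recast the hypothesis geometrically and then run a second–moment double count, using one structural lemma to recover the factor of $q$ that a naive count loses.

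First I would record that, for any point $y$, the condition $|\pi^y(E)|\le M$ says exactly that $E\setminus\{y\}$ is covered by at most $M$ lines through $y$. From this I extract the key structural lemma: \emph{if a line $\ell$ contains more than $M$ points of $E$, then every $y$ with $|\pi^y(E)|\le M$ lies on $\ell$.} Indeed, if such a $y$ were off $\ell$, then $\ell\cap E$ would be covered by the $\le M$ lines through $y$, each meeting $\ell$ in at most one point, forcing $|\ell\cap E|\le M$. Writing $B=\{y:|\pi^y(E)|\le M\}$, the lemma says every ``rich'' line (more than $M$ points of $E$) contains all of $B$; hence either there are two rich lines, in which case $|B|\le 1$ and we are done (note $B\ne\emptyset$ forces $|E|\le Mq<12q^{d-1}M$, so the stated bound holds once $|B|\le 1$), or there is at most one rich line.

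The engine is a count of collinear triples $(y,x_1,x_2)$ with $y\in B$ and $x_1\ne x_2\in E\setminus\{y\}$ collinear with $y$. For fixed $y\in B$ the $\ge|E|-1$ points of $E\setminus\{y\}$ lie on $\le M$ lines through $y$, so by convexity the number of such ordered pairs is at least $\tfrac{(|E|-1)^2}{M}-(|E|-1)$; summing over $B$ bounds the triple count below by $|B|\bigl(\tfrac{(|E|-1)^2}{M}-(|E|-1)\bigr)$. For the upper bound I group triples by the line $\ell=\overline{x_1x_2}$, whose contribution is at most $|E\cap\ell|^2\,|B\cap\ell|$. Here the crude estimate $|B\cap\ell|\le q$ yields only $|B|\lesssim qM$, a factor of $q$ short of the target $12q^{d-1}M|E|^{-1}$; recovering this factor is precisely what the structural lemma must buy, by forcing bad points to occupy very few points of each line.

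The main obstacle, where I expect the real work, is controlling $|B\cap\ell|$, i.e.\ ruling out concentration. The transversality input is that two distinct bad points $y_1,y_2$ force all but at most $M^2$ points of $E$ onto the single line $\overline{y_1y_2}$: each point off that line is the unique intersection of a line through $y_1$ with a line through $y_2$, an $M\times M$ grid. With $|E|\ge 6q^{d-1}$ this kills the generic case, since if $|E|>M^2+q$ then $\overline{y_1y_2}$ cannot hold the required $|E|-M^2>q$ points, so $|B|\le 1$. The delicate regime is $|E|\le M^2+q$, where $B$ is squeezed onto (at most) one rich line $\ell_0$ while the bulk of $E$ sits transversally off it. There I would re-run the collinear–pair count \emph{inside} $E\setminus\ell_0$: any two of its points are collinear with at most one point of $\ell_0\supseteq B$, so each pair is charged to at most one bad point, and combining this with the convexity lower bound produces the saving. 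Pinning down the constant and extracting the exact $q^{d-1}/|E|$ dependence in this concentrated regime—rather than the weaker $O(M)$ bound that the pair count gives unaided—is the step I expect to be hardest, and is presumably where a sharper point–line incidence estimate must enter.
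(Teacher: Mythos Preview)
Your approach diverges from the paper's and, as you yourself anticipate at the end, is incomplete. The paper does not use rich-line dichotomies or transversality; instead it bounds the bilinear sum $\sum_{\ell\in L} e(\ell)t(\ell)$ (with $e(\ell)=|\ell\cap E|$, $t(\ell)=|\ell\cap B|$, and $L$ the set of lines meeting both $E$ and $B$) by centring $e(\ell)$ at its mean $|E|q^{1-d}$ and invoking the variance bound $\sum_{\ell}\bigl(e(\ell)-|E|q^{1-d}\bigr)^2\le\binom{d}{1}_q|E|$, which is an exact consequence of the regularity of the point--line incidence structure in $\mathbb{F}_q^d$. Cauchy--Schwarz applied to $\sum_\ell t(\ell)\bigl(e(\ell)-|E|q^{1-d}\bigr)$, together with $\sum_{\ell\in L}t(\ell)\le M|B|$ and $\sum_{\ell\in L}t(\ell)^2\le M|B|+|B|^2$, gives
\[
|E|\,|B|\le Mq^{1-d}|E|\,|B|+\sqrt{(M|B|+|B|^2)\,|E|\binom{d}{1}_q},
\]
from which the claimed bound follows after inserting the hypotheses on $|E|$ and $M$. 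The ``sharper point--line incidence estimate'' you guess must enter is precisely this expander-mixing-type variance identity, and it replaces the entire structural analysis.

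Your outline has two concrete gaps. First, the claim that in the regime $|E|\le M^2+q$ the set $B$ is ``squeezed onto (at most) one rich line $\ell_0$'' is unjustified: your transversality input only says that two bad points force at least $|E|-M^2$ points of $E$ onto their span, but for $M$ near $q^{d-1}/4$ with $|E|$ of order $q^{d-1}$ one has $M^2\gg|E|$, so this is vacuous and there need be no rich line at all---nothing then pins $B$ to any line. Second, even when $B$ does lie on a single line, your pair count delivers only $|B|\lesssim M$, whereas the target $12q^{d-1}M/|E|$ is smaller by a factor of $|E|/q^{d-1}$, which can be as large as $q$. Your argument never uses the algebraic regularity of $\mathbb{F}_q^d$ and is closer in spirit to the paper's arbitrary-field results (Theorems~\ref{prop1.6} and~\ref{th:3.3intro}), which yield only $|B|\lesssim M^2$; extracting the $q^{d-1}/|E|$ saving genuinely requires the finite-field variance bound above.
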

This is stronger than the most natural finite analog to (\ref{conti-1}) in the case that $M$ is much smaller than $q^{d-1}$, and suggests the following strengthening of (\ref{conti-1}).
\begin{conjecture}\label{conj12}
Let $E \subset \mathbb{R}^d$ be a Borel set with $\dim_H(E) > d-1$, and let $s\leq d-1$ be a real number.
Then,
\[\dim_H\{y \in \mathbb{R}^d: \dim_H\pi^y(E)< s\} \leq d-1+s-\dim_H(E). \]
\end{conjecture}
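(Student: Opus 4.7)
Since this statement is a conjecture, we outline how one might attempt a proof in the spirit of Mattila--Orponen, and indicate the main obstacle. Assume for contradiction that the exceptional set $F_s := \{y\in\mathbb{R}^d : \dim_H\pi^y(E) < s\}$ satisfies $\dim_H F_s > d-1+s-\alpha$ for some $\alpha < \dim_H(E)$. By Frostman's lemma, fix compactly supported Borel measures $\mu$ on $E$ and $\nu$ on a Borel subset of $F_s$ with $\mu(B(x,r)) \lesssim r^\alpha$ and $\nu(B(y,r)) \lesssim r^\sigma$, where $\sigma := d-1+s-\alpha+\varepsilon$ for some small $\varepsilon > 0$. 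It would suffice to show that for every $t<s$,
\[ I_t(\mu,\nu) := \iiint \frac{d\mu(x_1)\,d\mu(x_2)\,d\nu(y)}{|\pi^y(x_1)-\pi^y(x_2)|^t} < \infty, \]
since Fubini then forces $\pi^y_*\mu$ to have finite $t$-energy on $S^{d-1}$ for $\nu$-a.e.\ $y$, hence $\dim_H \pi^y(E)\geq t$; letting $t\uparrow s$ along a countable sequence gives $\dim_H\pi^y(E)\geq s$ for $\nu$-a.e.\ $y$, contradicting that $\nu$ is supported on $F_s$.

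The plan for bounding $I_t(\mu,\nu)$ uses the inequality
\[ |\pi^y(x_1)-\pi^y(x_2)| \;\geq\; \frac{|x_1-x_2|\, d(y,\ell(x_1,x_2))}{|x_1-y|\,|x_2-y|}, \]
which follows from $|\pi^y(x_1)-\pi^y(x_2)| = 2\sin(\theta/2) \geq \sin\theta$ and the area formula for the triangle $y,x_1,x_2$. After a standard localization to a bounded region (separately disposing of the degenerate regime $|x_i-y|\ll 1$ via the Frostman bound on $\mu$), $I_t(\mu,\nu)$ is dominated by
\[ \iint |x_1-x_2|^{-t}\,\Phi_t(\ell(x_1,x_2))\,d\mu(x_1)\,d\mu(x_2), \qquad \Phi_t(\ell) := \int d(y,\ell)^{-t}\,d\nu(y). \]
The layer-cake formula rewrites $\Phi_t(\ell) = t\int_0^\infty \nu(T_r(\ell))\,r^{-t-1}\,dr$, where $T_r(\ell)$ is the $r$-tube around $\ell$; the factor $|x_1-x_2|^{-t}$ is integrable against $\mu\times\mu$ because $t<s\leq d-1<\alpha$. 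Hence the argument closes once one establishes a tube bound $\nu(T_r(\ell))\lesssim r^\beta$ (uniformly in $\ell$) with $\beta > t$.

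The main obstacle is producing such a tube bound with $\beta$ approaching $s$. From the Frostman property alone, a covering argument (cover $T_r(\ell)\cap B(0,1)$ by $\sim 1/r$ balls of radius $r$) yields only $\nu(T_r(\ell)) \lesssim r^{\sigma-1} = r^{d-2+s-\alpha+\varepsilon}$; since $\alpha>d-1$, this forces $t<s-1+\varepsilon$, far short of the target $t\uparrow s$. Closing the gap requires a Kaufman-type tube estimate for $\nu$ that exploits not only its $\sigma$-Frostman decay but also the defining property that $\nu$ is supported on the radial exceptional set $F_s$; producing such an estimate is what makes this conjecture delicate. An alternative is to mimic the proof of Theorem~\ref{th:largeESC} scale by scale, replacing $E$ and $F_s$ by $r$-nets and applying (\ref{eq:largeESC}) to count $\mu$-heavy lines through each $y$, followed by a dyadic pigeonhole over scales. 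This route also ultimately requires a good scale-to-scale incidence input, so the central obstruction is the same: the finite-field proof works with sets rather than measures and thus bypasses the tube issue entirely, whereas any continuous proof must confront it.
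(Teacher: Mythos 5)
This statement is Conjecture~\ref{conj12}: the paper does not prove it, and neither do you --- you are explicit that what you give is an outline plus an identification of the obstruction, so there is no complete argument here to certify. That said, your sketch is an accurate account of the standard Mattila--Orponen/Peres--Schlag route and of why it stalls. The geometric inequality $|\pi^y(x_1)-\pi^y(x_2)|\ge |x_1-x_2|\,d(y,\ell(x_1,x_2))\,|x_1-y|^{-1}|x_2-y|^{-1}$ is correct (via $2\sin(\theta/2)\ge\sin\theta$ and the two area formulas), the layer-cake reduction to a tube bound for $\nu$ is the right reduction, and your arithmetic is right that the trivial covering bound $\nu(T_r(\ell))\lesssim r^{\sigma-1}$ with $\sigma=d-1+s-\alpha+\varepsilon$ and $\alpha>d-1$ only reaches $t<s-1+\varepsilon$, so a Kaufman-type tube estimate exploiting the structure of the exceptional set is exactly what is missing. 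Your closing remark is also a fair diagnosis of why Theorem~\ref{th:largeESC} (the finite-field analogue that motivated the conjecture) does not transfer: its double-counting over lines has no need for tube estimates. For completeness, note that the paper records (in the update at the end of the introduction) that Orponen and Shmerkin have since established the conjecture in dimension $d=2$; any honest attempt at the general case should engage with their methods rather than the classical energy framework alone. In short: nothing in your outline is wrong, but it is a roadmap with a named missing ingredient, not a proof, and should be presented as such.
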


Liu \cite{liu} proved that for any Borel set $E\subset \mathbb{R}^d$ with $\dim_H(E) \leq d-1$,
\begin{equation}\label{conti-2}
    \dim_H\{y\in \mathbb{R}^d\colon \dim_H \pi^y(E)<\dim_H(E)\}\le \min(2(d-1)-\dim_H(E), \dim_H(E)+1).
\end{equation}
Liu and Orponen \cite{liu} further suggested the following, stronger conjecture.
\begin{conjecture}\label{conjec}
Let $E\subset \mathbb{R}^d$ be a Borel set with $\dim_H(E)\in (k-1, k]$ for $k \in \{0, 1,\ldots, d-1\}$. Then 
\[\dim_H\left\lbrace y\in \mathbb{R}^d\colon \dim_H\pi^y(E)<\dim_H(E) \right\rbrace\le k. \]
\end{conjecture}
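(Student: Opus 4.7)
The plan is to attack Conjecture~\ref{conjec} by way of a finite-field analog that can be proved inductively on $d$, and then to transfer the conclusion via discretization. In the finite-field regime the target statement is: if $E\subset\Fq^d$ with $q^{k-1}<|E|\le q^k$ for integer $k\in\{1,\ldots,d-1\}$, then
\[
\#\{y\in\Fq^d:|\pi^y(E)|\le c|E|\}\le Cq^k
\]
for absolute constants $c,C>0$. Theorem~\ref{th:largeESC} supplies the base case $k=d-1$ (where $|E|\gtrsim q^{d-1}$) and fixes the expected shape of the general bound.

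The natural reduction dualizes exceptional centers: $y$ is exceptional precisely when $E$ is covered by a family $\L_y$ of at most $c|E|$ lines through $y$. Slice by a pencil $\{H_t\}_{t\in\Fq}$ of parallel hyperplanes, so each $y$ lies in exactly one $H_{t(y)}$ and a typical slice has $|E\cap H_t|\approx q^{k-1}$. The key geometric observation is that if $y\in H_{t(y)}$ and $x\in E\cap H_{t(y)}$ with $x\ne y$, then the line through $x$ and $y$ lies wholly inside $H_{t(y)}$, hence inside $\L_y$. Consequently $\pi^y(E\cap H_{t(y)})$ is in bijection with the subfamily of $\L_y$ contained in $H_{t(y)}$. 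Since only a $1/q$ fraction of lines through $y$ fit inside a given hyperplane through $y$, for a generic $\L_y$ this subfamily has size $\approx |\L_y|/q\approx cq^{k-1}$, and so, relative to $|E\cap H_t|\approx q^{k-1}$, the exceptionality threshold $c$ is preserved on the slice. Applying the induction hypothesis inside $H_t\cong\Fq^{d-1}$ caps each $|Y\cap H_t|$ by $Cq^{k-1}$, and summing over the $q$ slices of the pencil recovers $|Y|\lesssim q^k$.

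The principal obstacle is \emph{concentration} of $\L_y$ on the slice: for a particular exceptional $y$ it may happen that a disproportionate number of lines in $\L_y$ (up to $\approx q^{d-2}$) lie inside $H_{t(y)}$, which breaks the $1/q$ heuristic and would force the induction to be applied with an inflated threshold. The intended remedy is a double averaging step: first partition $Y$ according to the size of $\L_y\cap H_{t(y)}$; then run the pencil argument along several independent directions so that, for a positive fraction of choices of pencil, each $y$ enjoys the generic $1/q$ distribution of its line family. Controlling the iterated loss in $c$ across the levels $k=d-1,d-2,\ldots,1$ without degrading the final exponent is the real work.

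Once the finite-field statement is secured with constants independent of $q$, the continuous conjecture should follow by a standard $\delta$-discretization with $q\leftrightarrow\delta^{-1}$, replacing the uniform hyperplane pencil with an integration against a Frostman measure on $E$. This last transfer is routine at codimension one but demands a genuine Fourier-analytic input for smaller $k$, and dovetailing that input with the combinatorial averaging above is the step I would expect to be the most delicate.
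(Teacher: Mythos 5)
This statement is a \emph{conjecture} (due to Liu and Orponen), not a theorem of the paper: the paper offers no proof of it, only finite-field analogs in restricted ranges (Theorems \ref{th:largeESC}, \ref{th:largeTSC}, \ref{th:justCS-intro}) and a companion finite-field conjecture (Conjecture \ref{conj:ff}) that is itself left open for $|E|<q^{d-2}$; the paper notes that even the Euclidean statement is known only for $d=2$ by recent work of Orponen and Shmerkin. Your proposal is a research plan rather than a proof, and both of its load-bearing steps fail. First, the inductive slicing argument does not close. Exceptionality of $y$ for $E$ does not localize to exceptionality of $y$ for $E\cap H_{t(y)}$: if $E$ is concentrated on one hyperplane $H_{t_0}$ of the pencil (which is exactly the extremal configuration for the conjecture when $k\le d-2$), then every exceptional $y\notin H_{t_0}$ lies in a slice with $E\cap H_{t(y)}=\emptyset$ and the induction hypothesis says nothing about it; and even for spread-out $E$, the ``$1/q$ distribution'' of the line family $\L_y$ across hyperplanes through $y$ is precisely the unproved content, which you acknowledge (``the real work'') without supplying an argument. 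The paper's own bound in this range, Theorem \ref{th:justCS-intro}, gives only $(C-1)^{-1}q|E|\approx q^{k+1}$, a full power of $q$ worse than what your induction would need at each level, so there is no available base or intermediate input of the required strength.

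Second, the final transfer from $\mathbb{F}_q^d$ to $\mathbb{R}^d$ is not ``standard'' and in fact does not exist: there is no mechanism by which a theorem about arbitrary subsets of $\mathbb{F}_q^d$ implies a theorem about Borel (or $\delta$-discretized Frostman) subsets of $\mathbb{R}^d$. The two models genuinely diverge --- the paper emphasizes that Theorem \ref{th:largeESC} is \emph{stronger} than the known continuous estimate (\ref{conti-1}), and subfield/subring obstructions that govern the finite setting have no direct Euclidean counterpart --- so even a complete proof of Conjecture \ref{conj:ff} would leave Conjecture \ref{conjec} open. As written, the proposal establishes nothing beyond what the paper already proves, and the statement should be regarded as remaining a conjecture.
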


We prove the natural finite field analog to Conjecture \ref{conjec} for $|E| \geq q^{d-2}$.
\begin{theorem}\label{th:largeTSC}
Let $E \subset \mathbb{F}_q^d$ with $|E| \leq 100^{-1}q^{d-1}$.
Then,
\begin{equation}\label{eq:largeTSC}\#\{y \in \mathbb{F}_q^d: |\pi^y(E )| \leq 10^{-1}|E|\} < 8q^{d-1}. 
\end{equation}
\end{theorem}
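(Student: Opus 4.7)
The plan is to attack this by a Cauchy--Schwarz double counting on collinear triples. For each line $\ell \subset \mathbb{F}_q^d$, write $n_\ell := |\ell \cap E|$, and for each $y \in \mathbb{F}_q^d$ define
\[ T(y) := \sum_{\ell \ni y} n_\ell (n_\ell - 1), \]
which counts ordered pairs $(x_1, x_2) \in E^2$ with $x_1 \neq x_2$ that are collinear with $y$ (with $y$ being an arbitrary point on the line $\overline{x_1 x_2}$).

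The first step is a pointwise lower bound: for every $y \in B$ I expect $T(y) \geq 9|E| - O(1)$. When $y \notin E$, the $|E|$ points of $E$ are distributed among at most $|\pi^y(E)| \leq |E|/10$ lines through $y$, so Cauchy--Schwarz gives $\sum_{\ell \ni y} n_\ell^2 \geq |E|^2 / |\pi^y(E)| \geq 10|E|$, and subtracting $\sum_\ell n_\ell = |E|$ yields $T(y) \geq 9|E|$. The case $y \in E$ is analogous, applied to $E \setminus \{y\}$, at the cost of an additive $O(1)$ error (and for $|E| < 10$ the statement is essentially trivial).

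The complementary global identity is obtained by swapping the order of summation: since each ordered pair $(x_1, x_2) \in E^2$ with $x_1 \neq x_2$ contributes exactly $q$ apex points on its unique connecting line,
\[ \sum_{y \in \mathbb{F}_q^d} T(y) \;=\; q \sum_\ell n_\ell(n_\ell-1) \;=\; q\,|E|(|E|-1) \;\leq\; q|E|^2. \]
Combining with the lower bound over $y \in B$, $9|B||E| - O(|B|) \leq q|E|^2$, so $|B| \leq q|E|/9 + O(1)$. Using the hypothesis $|E| \leq q^{d-1}/100$ this is at most $q^d/900 + O(1)$, which is strictly smaller than $8q^{d-1}$ for all $q < 7200$, covering a wide range.

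The main obstacle is closing the remaining large-$q$ regime, in which $q^d/900$ exceeds $8q^{d-1}$ and the single Cauchy--Schwarz is too crude. Here I would refine by a per-line analysis: for any fixed line $\ell$, any $y \in B \cap \ell \setminus E$ forces a radial-projection condition for $\pi_\ell(E \setminus \ell)$ at the distinguished point in the quotient $\mathbb{F}_q^d / \ell \cong \mathbb{F}_q^{d-1}$, since the line $\ell$ itself accounts for only one of the at most $|E|/10$ covering lines through $y$. This should either enable an induction on $d$ or a point-line incidence estimate that caps $|B \cap \ell|$ by a small absolute constant for every $\ell$, after which summing the per-line incidences (each point of $B$ lies on $(q^d-1)/(q-1) \approx q^{d-1}$ lines) yields $|B| < 8q^{d-1}$ uniformly. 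Making this per-line control yield the clean constant $8$ is where I expect the real work to lie.
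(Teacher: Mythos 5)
Your first argument is correct, but it proves a weaker statement: the double count of collinear triples gives $|B\setminus E| < (10-1)^{-1}q|E| = q|E|/9$, which is precisely Theorem \ref{th:justCS-intro} (i.e.\ Lemma \ref{lem:TOffALine} with $C=10$ and $k=q$). As you note, under $|E|\le q^{d-1}/100$ this yields roughly $q^{d}/900$, which beats $8q^{d-1}$ only when $q$ is below an absolute constant (about $7200$); the entire content of the theorem lies in the regime you leave open, so the proposal as it stands has a genuine gap.

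Moreover, the route you sketch for closing that gap cannot work in the form stated. The hoped-for cap ``$|B\cap\ell|$ is at most a small absolute constant for every line $\ell$'' is false: take $E$ to be any subset of a hyperplane $H$ with $|E|=\lfloor q^{d-1}/100\rfloor$. Every $y\in H$ sees $E$ only along lines contained in $H$, so $|\pi^y(E)|\le \binom{d-1}{1}_q < 2q^{d-2}$, which is below $|E|/10$ once $q$ exceeds an absolute constant; hence $H\subseteq B$ and every line of $H$ carries $q$ points of $B$. (This example also shows the theorem is essentially sharp, so any correct argument must tolerate lines entirely filled with $B$-points.) The quotient/induction idea is too vague to evaluate, but note that the image of $E$ in $\mathbb{F}_q^{d}/\ell$ may be far smaller than $E$, so the inherited projection condition is typically vacuous. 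The ingredient you are missing, and the one the paper uses, is to bound $\sum_{\ell}e(\ell)t(\ell)$ from above by splitting the lines according to whether $t(\ell)$ exceeds a suitable multiple of its mean $|T|q^{1-d}$, controlling the $T$-rich lines via the second-moment identity $\sum_{\ell}\bigl(t(\ell)-|T|q^{1-d}\bigr)^2\le\binom{d}{1}_q|T|$ applied to the unknown set $T$ rather than to $E$, combined with the trivial bound $\sum_{\ell:\,e(\ell)\ge 2}e(\ell)^2<2|E|^2$; it is this variance estimate for $T$ that produces a bound of order $q^{d-1}$ uniformly in $q$.
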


For $|E| < q^{d-2}$, we prove the following bound, which is slightly stronger than the natural analog to (\ref{conti-2}) over the corresponding range.
\begin{theorem}\label{th:justCS-intro}
Let $E \subset \mathbb{F}_q^d$ and let $1<C<|E|$.
Then,
\begin{equation}\label{eq:justCS}
\#\{y \in \mathbb{F}_q^d:|\pi^y(E)| < C^{-1}|E|\} < (C-1)^{-1}q|E|. \end{equation}
\end{theorem}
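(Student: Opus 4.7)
The plan is to double-count ordered pairs of collinear points via a Cauchy--Schwarz bound applied to each radial projection.

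Write $B = \{y \in \mathbb{F}_q^d : |\pi^y(E)| < C^{-1}|E|\}$ for the exceptional set, and for each $y \in \mathbb{F}_q^d$ let $M(y)$ denote the number of ordered pairs $(x_1, x_2) \in E^2$ with $x_1 \neq x_2$ and $y$ lying on the (unique) line spanned by $x_1$ and $x_2$. Since each such ordered pair determines a line containing exactly $q$ points of $\mathbb{F}_q^d$, summing over $y$ gives the clean global identity $\sum_{y \in \mathbb{F}_q^d} M(y) = |E|(|E|-1)\,q$.

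For each $y \in B$, I would bound $M(y)$ from below using Cauchy--Schwarz on the fibres of $\pi^y$. Set $f_y(\ell) = |(\ell \cap E) \setminus \{y\}|$ for lines $\ell \ni y$, so that $\sum_{\ell \in \pi^y(E)} f_y(\ell) = |E \setminus \{y\}|$. Combined with the hypothesis $|\pi^y(E)| < C^{-1}|E|$, Cauchy--Schwarz yields
\[\sum_{\ell \in \pi^y(E)} f_y(\ell)^2 \;\geq\; \frac{|E \setminus \{y\}|^2}{|\pi^y(E)|} \;>\; \frac{C\,|E\setminus\{y\}|^2}{|E|}.\]
If $y \notin E$, then the ordered pairs counted by $M(y)$ are exactly those $(x_1, x_2) \in E^2$ with $x_1 \neq x_2$ and $\pi^y(x_1) = \pi^y(x_2)$, so $M(y) = \sum f_y(\ell)(f_y(\ell)-1) > C|E| - |E| = (C-1)|E|$. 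If instead $y \in E$, one additionally collects $2(|E|-1)$ pairs with $x_1 = y$ or $x_2 = y$, and a short algebraic check reduces the desired inequality $M(y) > (C-1)|E|$ to $(|E|-C)(2|E|-1) > 0$, which is precisely the hypothesis $C < |E|$.

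Combining the uniform lower bound $M(y) > (C-1)|E|$ on $B$ with the global identity gives
\[(C-1)|E| \cdot |B| \;<\; \sum_{y \in B} M(y) \;\leq\; |E|(|E|-1)\,q,\]
whence $|B| < (|E|-1)q/(C-1) < (C-1)^{-1} q |E|$, as desired. The main (mild) obstacle is the $y \in E$ case: there $y$ lies on every line through itself, the Cauchy--Schwarz bound for $\sum f_y(\ell)^2$ loses a term relative to the $y \notin E$ case, and the deficit has to be recovered from the $2(|E|-1)$ trivially collinear contributions; the assumption $C < |E|$ is exactly what makes these two pieces balance to clear the $(C-1)|E|$ threshold.
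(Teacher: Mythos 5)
Your proposal is correct and follows essentially the same route as the paper: both arguments double-count collinear triples $(y,x_1,x_2)$ with $x_1,x_2\in E$ distinct, lower-bounding the count at each exceptional $y$ via Cauchy--Schwarz on the fibres of $\pi^y$ and upper-bounding it via the fact that the line spanned by a pair of distinct points contains at most (in your version, exactly) $q$ points. The only cosmetic difference is that you sum over all of $\mathbb{F}_q^d$ to get an exact identity, whereas the paper proves a slightly more general lemma with a parameter $k$ bounding $|\ell\cap T|$ and specializes to $k=q$; your handling of the $y\in E$ case, including the reduction to $(|E|-C)(2|E|-1)>0$, checks out.
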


We do not believe that Theorem \ref{th:justCS-intro} is sharp, and offer the following conjecture, which is a natural finite analog to Conjecture \ref{conjec}.
\begin{conjecture}\label{conj:ff}
Let $E \subset \mathbb{F}_q^d$ with $q^{k-1} < |E| \leq q^k$ for $k \in \{1,\ldots,d-1\}$.
Then,
\[\#\{y \in \mathbb{F}_q^d: |\pi^y(E)| < 10^{-1}|E|\} \leq 10q^k . \]
\end{conjecture}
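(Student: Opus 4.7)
The conjecture improves Theorem \ref{th:justCS-intro} by a factor of $q$ in the regime $q^{k-1} < |E| \leq q^k$, and is already essentially established for $k = d-1$ by Theorem \ref{th:largeTSC}, so the substantive task is $1 \leq k \leq d-2$. Set $Y := \{y \in \mathbb{F}_q^d : |\pi^y(E)| < |E|/10\}$; the goal is $|Y| \leq 10 q^k$.

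My starting point is the same Cauchy--Schwarz that drives Theorem \ref{th:justCS-intro}: for each $y \in Y$, $\sum_{\ell \ni y} |\ell \cap E|^2 \geq |E|^2/|\pi^y(E)| > 10\,|E|$. Summing over $y \in Y$ and isolating the off-diagonal contribution,
\[\sum_\ell |\ell \cap E|\,(|\ell \cap E|-1)\,|\ell \cap Y| \;\geq\; 9\,|E|\,|Y| \;-\; \mathrm{error},\]
where $\mathrm{error}$ is $O(|E \cap Y|\, q^{d-1})$, handled separately. The left-hand side equals $\sum_{(x_1,x_2) \in E^2,\; x_1 \neq x_2} |Y \cap \ell(x_1,x_2)|$. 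The Theorem \ref{th:justCS-intro} argument bounds $|Y \cap \ell(x_1,x_2)| \leq q$ uniformly, recovering $|Y| \lesssim q|E|$. To gain the missing factor of $q$, one must replace this worst-case bound with an amortized one: on average over pairs in $E^2$, the line $\ell(x_1, x_2)$ should contain $O(1)$, not $q$, points of $Y$.

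The extremal case $E \subseteq V$ for a $k$-flat $V$ shows where amortization can fail: then $\pi^y|_V$ is injective for $y \notin V$, so $Y \subseteq V$ and $|Y| \leq |V| = q^k$, yet every pair in $E^2$ lies on a line inside $V$ whose $Y$-intersection is large. I would therefore pursue a dichotomy. Either a definite fraction of $E$ is concentrated inside a $k$-flat---in which case the extremal analysis gives $|Y| \lesssim q^k$ directly---or $E$ is spread across all $k$-flats, and the average $|Y \cap \ell(x_1, x_2)|$ is bounded by a constant, yielding the desired bound. A complementary approach is slicing-induction on $d$: a generic hyperplane $H$ satisfies $|E \cap H| \approx q^{k-1}$ and $|Y \cap H| \approx |Y|/q$, suggesting a route from $(d, k)$ down to $(d-1, k-1)$.

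\textbf{Main obstacle.} Both routes hit the same substantive snag. For the dichotomy, extracting a $k$-flat that absorbs a positive fraction of $E$ from the inequality above requires a structural or incidence tool not obviously in hand in the $\mathbb{F}_q^d$ setting for $k < d-1$. For the slicing, the catch is that $y \in Y \cap H$ with $|\pi^y(E)| < |E|/10$ does \emph{not} automatically satisfy $|\pi^y(E \cap H)| < |E \cap H|/10$: only lines through $y$ that lie in $H$ see $E \cap H$, so the exceptionality hypothesis weakens under restriction rather than transferring. Since Conjecture \ref{conj:ff} is the finite-field analogue of the open Liu--Orponen Conjecture \ref{conjec}, I expect its resolution to require a new combinatorial idea---most plausibly a structural theorem classifying configurations $(E, Y)$ that realize an extremal number of three-term collinear incidences---and identifying that idea is the principal difficulty.
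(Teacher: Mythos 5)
This statement is Conjecture \ref{conj:ff}; the paper offers no proof of it. It is stated as an open problem --- the finite-field analogue of the Liu--Orponen Conjecture \ref{conjec} --- and the authors prove it only in the range $|E| \geq q^{d-2}$ (essentially your case $k = d-1$, via Theorem \ref{th:largeTSC}), explicitly conjecturing the rest. So there is no ``paper's proof'' to compare yours against, and your proposal does not close the gap either: you are candid that both of your routes stall at the decisive step. Concretely, the dichotomy route needs a structure theorem saying that if the average of $|Y \cap \ell(x_1,x_2)|$ over pairs $(x_1,x_2) \in E^2$ exceeds a constant, then a positive fraction of $E$ lies in a $k$-flat; no such tool is available in $\mathbb{F}_q^d$ for $k < d-1$, and the incidence machinery the paper does invoke (Stevens--de Zeeuw, Szemer\'edi--Trotter) is restricted to the plane. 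The slicing route fails for the reason you identify: membership in $Y$ does not restrict to hyperplane sections, since $|\pi^y(E)| < |E|/10$ gives no control on $|\pi^y(E \cap H)|$ relative to $|E \cap H|$.

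What you have written correctly is the surrounding landscape: Theorem \ref{th:justCS-intro} with $C = 10$ gives $|Y| \leq q|E|/9 \leq q^{k+1}/9$, one factor of $q$ short; the extremal example $E \subseteq V$ with $V$ a $k$-flat forces $Y \subseteq V$ (any line through two points of $V$ lies in $V$), so $|Y| \leq q^k$ and the conjectured bound is tight up to constants; and the loss in the Cauchy--Schwarz argument comes precisely from bounding $|Y \cap \ell(x_1,x_2)| \leq q$ uniformly rather than on average. This is a reasonable diagnosis of why the problem is hard, but it is a research plan, not a proof. If you want a provable statement in this range, Theorem \ref{th:justCS-intro} (equivalently Lemma \ref{lem:TOffALine} with $k = q$) is the paper's best unconditional result for $|E| < q^{d-2}$, and improving it to the conjectured $O(q^k)$ would be new.
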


If true, then Conjectures \ref{conjec} and \ref{conj:ff} would be best possible, up to constant factors.
Indeed, if $E$ is contained in a translate of a $k$-dimensional subspace $F$, then $\dim_H \pi^y(E) \leq k-1$ for every $y \in F$.
However, in some cases it is possible to obtain stronger conclusions by assuming that $E$ is not nearly contained in a translate of a low-dimensional subspace.
For example, Orponen showed that, if $E \subset \mathbb{R}^2$ is a Borel set such that $\dim_H(E \setminus \ell) = \dim_H(E)$ for each line $\ell$, then
\begin{equation}\label{conti-4}\dim_H\left\lbrace y\in \mathbb{R}^2\colon \dim_H \pi^y(E)<\frac{\dim_H(E)}{2} \right\rbrace=0.\end{equation}
As a consequence of this result, when $\dim_H(E)>0$ and $\dim_H(E\setminus \ell) = \dim_H(E)$ for each line $\ell$, there exists $y\in E$ such that $\dim_H \pi^y(E)>\frac{\dim_H(E)}{2}$. 
Several improvements have been obtained in \cite{LIU, Shmerkin, Shmerkin2}. 
The most recent, due to Shmerkin and Wang \cite{Shmerkin2}, states that for a fixed $k\in \{1, \ldots, d-1\}$, $d\ge 2$, and a Borel set $E\subset \mathbb{R}^d$ with $\dim_H(E)=s\in (k-1/k-\eta(d), k]$, where $\eta(d)$ is a small positive constant satisfying $\eta(1)=0$, then 
\begin{equation}\label{best-current}
\sup_{y\in E} \dim_H \pi^y(E) \ge  k-1+\phi(s-k+1),~~\phi(u)=\frac{u}{2}+\frac{u^2}{2(2+\sqrt{u^2+4})},
\end{equation}
under the condition that $E$ is not contained in any $k$-plane. Hence, with $d=2$ and $k=1$, one has there exists $y\in E$ such that 
\begin{equation}\label{best-current-plane}\dim_H\pi^y(E)\ge \phi(s)>s/2.\end{equation}

The following finite analog to (\ref{conti-4}) has a very simple proof, using only the fact that there is a unique line through any pair of points.
\begin{theorem}\label{prop1.6}
Let $\mathbb{F}$ be an arbitrary field, and let $E \subset \mathbb{F}^d$ be a finite set of points such that no line contains more than $(3/4)|E|$ points of $E$.
Then,
\begin{equation}
    \#\{y \in \mathbb{F}^d : |\pi^y(E)|< 2^{-1}|E|^{1/2}\} \leq 1.
\end{equation}
\end{theorem}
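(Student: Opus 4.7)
The plan is to argue by contradiction. Suppose there exist two distinct points $y_1, y_2 \in \mathbb{F}^d$ both satisfying $|\pi^{y_i}(E)| < |E|^{1/2}/2$. Let $\ell$ denote the unique line through $y_1$ and $y_2$; I will show that $\ell$ must contain more than $(3/4)|E|$ points of $E$, contradicting the hypothesis.

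The key step is to construct an injection
\[\Phi: E \setminus \ell \longrightarrow \pi^{y_1}(E) \times \pi^{y_2}(E), \qquad \Phi(x) = \bigl(\pi^{y_1}(x),\pi^{y_2}(x)\bigr).\]
The map is well defined because any $x \in E\setminus \ell$ is distinct from $y_1$ and $y_2$, so both $\pi^{y_i}(x)$ make sense. To see injectivity, suppose $\Phi(x) = \Phi(x')$ for $x, x' \in E\setminus \ell$. Then both points lie on the line $\ell_1 := \pi^{y_1}(x)$ (which passes through $y_1$) and on the line $\ell_2 := \pi^{y_2}(x)$ (which passes through $y_2$). Since $x \notin \ell$, neither $\ell_1$ nor $\ell_2$ equals $\ell$; in particular $y_2 \notin \ell_1$, so $\ell_1 \neq \ell_2$. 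Two distinct lines meet in at most one point, forcing $x = x'$.

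Applying the bounds on $|\pi^{y_i}(E)|$ gives
\[|E\setminus \ell| \;\leq\; |\pi^{y_1}(E)|\cdot|\pi^{y_2}(E)| \;<\; \frac{|E|^{1/2}}{2}\cdot\frac{|E|^{1/2}}{2} \;=\; \frac{|E|}{4},\]
and therefore $|E\cap \ell| > (3/4)|E|$. This contradicts the hypothesis that no line contains more than $(3/4)|E|$ points of $E$.

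There is no real obstacle here: once the intersection-of-two-lines observation is identified, the rest is bookkeeping, and the proof works verbatim over any field since it relies only on the fact that two distinct lines share at most one point. I would simply verify carefully that points in $E \setminus \ell$ are automatically different from $y_1$ and $y_2$ (they are, since $y_1, y_2 \in \ell$), so the map $\Phi$ needs no ad hoc adjustment when $y_1$ or $y_2$ happens to lie in $E$.
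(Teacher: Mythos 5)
Your proof is correct and follows essentially the same route as the paper: both arguments bound the number of points of $E$ off the line $\ell$ joining the two hypothetical exceptional points by (roughly) $|\pi^{y_1}(E)|\cdot|\pi^{y_2}(E)| < |E|/4$, forcing $\ell$ to contain more than $(3/4)|E|$ points of $E$. Your packaging of this count as an explicit injection $x \mapsto (\pi^{y_1}(x),\pi^{y_2}(x))$ is a clean equivalent of the paper's union bound over the lines through $y_1$ other than $\ell$.
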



In a general setting, we have the next theorem. 
\begin{theorem}\label{th:3.3intro}
Let $\mathbb{F}$ be an arbitrary field and $E$ be a set in $\mathbb{F}^d$ such that no line contains more than $|E|/2$ points of $E$. For $M < |E|/4$, we have
\[\#\{y \in \mathbb{F}^d: |\pi^y(E)| < M\} < 4M^2. \]
\end{theorem}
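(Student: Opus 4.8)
The plan is to control the ``bad'' set $B=\{y\in\mathbb{F}^d:|\pi^y(E)|<M\}$ by a single double count of collinear triples. Write $n=|E|$, and for a point $y$ and a line $\ell\ni y$ put $n_{\ell,y}=|\ell\cap(E\setminus\{y\})|$ and $e_\ell=|E\cap\ell|$. I would study
\[
N=\#\{(y,\{a,b\}):y\in B,\ a,b\in E\setminus\{y\},\ a\neq b,\ y\in\overline{ab}\}=\sum_{y\in B}\sum_{\ell\ni y}\binom{n_{\ell,y}}{2},
\]
and estimate it from below and from above. For the lower bound, fix $y\in B$: since $|\pi^y(E)|<M$, the at least $n-1$ points of $E\setminus\{y\}$ are spread over at most $M-1$ lines through $y$, so Cauchy--Schwarz gives $\sum_\ell n_{\ell,y}^2\ge (n-1)^2/(M-1)$ and hence $\sum_\ell\binom{n_{\ell,y}}{2}\ge \frac{(n-1)(n-M)}{2(M-1)}$. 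Summing over $y$ yields $N\ge |B|\cdot\frac{(n-1)(n-M)}{2(M-1)}$.

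For the upper bound, grouping the pairs $\{a,b\}$ by the line $\overline{ab}$ they span gives $N\le \sum_\ell\binom{e_\ell}{2}\,|B\cap\ell|$. The decisive input is that \emph{every} line contains few bad points: I claim $|B\cap\ell|<2M$ for each line $\ell$. To prove this, set $F=E\setminus\ell$; the hypothesis that no line carries more than $|E|/2$ points of $E$, combined with $M<|E|/4$, forces $|F|\ge n/2>2M$. For each $y\in B\cap\ell$ we have $|\pi^y(F)|\le|\pi^y(E)|<M$, so the same Cauchy--Schwarz estimate shows $y$ lies on at least $\tfrac12\big(|F|^2/(M-1)-|F|\big)$ collinear pairs of $F$. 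Since any two points of $F$ span a line distinct from $\ell$, that line meets $\ell$ in at most one point, so summing these counts over $y\in\ell$ gives at most $\binom{|F|}{2}$; rearranging and using $|F|>2M$ produces $|B\cap\ell|<2M$. Feeding this back and using the identity $\sum_\ell\binom{e_\ell}{2}=\binom{n}{2}$ (each pair of distinct points of $E$ lies on a unique line) gives $N<2M\binom{n}{2}=Mn(n-1)$.

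Combining the two bounds, $|B|\cdot\frac{(n-1)(n-M)}{2(M-1)}\le N<Mn(n-1)$, so that
\[
|B|<\frac{2Mn(M-1)}{\,n-M\,}=2M(M-1)\cdot\frac{n}{\,n-M\,}.
\]
Because $M<n/4$ forces $\tfrac{n}{n-M}<\tfrac{4}{3}$, this yields $|B|<\tfrac{8}{3}M(M-1)<4M^2$, as required. I expect the proof to need no case distinction.

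The main obstacle is the per-line bound $|B\cap\ell|<2M$: the crude upper estimate $N\le\sum_\ell\binom{e_\ell}{2}|B\cap\ell|$ is useless without a uniform control on the number of bad points on a single line, and this is exactly where the non-concentration hypothesis is used. If some line $\ell$ carried almost all of $E$, then $F=E\setminus\ell$ would be tiny, the Cauchy--Schwarz estimate on $F$ would be vacuous, and arbitrarily many bad points could pile up on $\ell$; the bound $e_\ell\le |E|/2$ is precisely what guarantees $|F|\ge n/2>2M$ and hence keeps the collapsing-pairs argument effective for every line, including the richest ones. Getting clean constants out of the two Cauchy--Schwarz steps (so that the final constant lands below $4$) is the only other point requiring care.
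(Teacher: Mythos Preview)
Your proof is correct and follows essentially the same approach as the paper. The paper separates the argument into two lemmas---one bounding $|T\cap\ell|$ on each line by $2M$ via the collinear-pairs-in-$F=E\setminus\ell$ count (their Lemma~3.2), and one turning a uniform per-line bound $|T\cap\ell|<k$ into $|T|<(C-1)^{-1}k|E|$ via the global collinear-triple count (their Lemma~3.1)---and then plugs in $k=2M$, $C=|E|/M$; you carry out both steps in a single argument with slightly different bookkeeping (unordered pairs, tracking $M-1$ instead of $M$), landing at $|B|<\tfrac{8}{3}M(M-1)$ rather than their $\tfrac{2M^2|E|}{|E|-M}$, but the ideas and the use of the non-concentration hypothesis are identical.
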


This theorem is essentially sharp up to constant factors. For example, suppose that $\mathbb{F}_q$ has a subfield of order $p$, and that $E \subset \mathbb{F}_q^2$ is the set of points in a subplane isomorphic to $\mathbb{F}_p^2$.
Then, $|\pi^y(E)| = p+1 > |E|^{1/2}$ for every point $y \in E$. 

We can reasonably hope to improve Theorem \ref{th:3.3intro} under the additional assumption that $\mathbb{F}$ does not have a subfield of suitable size.
We obtain some such improvements using incidence bounds, in particular the Szemer\'edi-Trotter theorem in the real plane \cite{Szemeredi} and an analog proved by Stevens and de Zeeuw for planes over fields of finite characteristic \cite{frank}.

\begin{theorem}\label{radial-real-intro}
There exist constants $0<c_1,c_2<1$ such that the following holds.
For any finite $E \subset \mathbb{R}^2$ such that $|\ell \cap E| < c_1 |E|$ for each line $\ell$ and $M \leq c_2 |E|$, we have
\[\#\{y \in \mathbb{R}^2 : |\pi^y(E)| < M\} < O(M^2 |E|^{-1}). \]
\end{theorem}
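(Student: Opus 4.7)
My plan is to combine a standard double-counting argument with the Szemer\'edi--Trotter incidence theorem for lines in the real plane. Let $Y := \{y \in \mathbb{R}^2 : |\pi^y(E)| < M\}$, set $k := |E|/(4M)$, write $m_\ell := |\ell \cap E|$, and let $\mathcal{L}_k := \{\ell : m_\ell \geq k\}$ be the set of $k$-rich lines. For each $y \in Y$, the set $E \setminus \{y\}$ is covered by fewer than $M$ lines through $y$. Those lines through $y$ containing fewer than $k$ points of $E$ contribute at most $Mk = |E|/4$ to this cover, so the $k$-rich lines through $y$ must together cover at least $\tfrac{3}{4}|E| - 1 \geq |E|/2$ points of $E$. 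Summing over $y \in Y$ and exchanging the order of summation yields the key lower bound
\begin{equation*}
\sum_{\ell \in \mathcal{L}_k} m_\ell \, |\ell \cap Y| \;\geq\; \tfrac{1}{2}\,|Y|\,|E|.
\end{equation*}

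To upper bound the same sum I would dyadically decompose the rich lines according to their point count. For each scale $t \in \{k, 2k, 4k, \ldots\}$ (capped at $c_1|E|$ by hypothesis), Szemer\'edi--Trotter supplies both $L_t := |\mathcal{L}_{[t,2t]}| = O(|E|^2/t^3 + |E|/t)$ and $I(Y, \mathcal{L}_{[t,2t]}) = O((|Y| L_t)^{2/3} + |Y| + L_t)$. Summing $2t \cdot I(Y, \mathcal{L}_{[t,2t]})$ across scales and separating the low regime $t \leq \sqrt{|E|}$ (where $L_t \lesssim |E|^2/t^3$) from the high regime $t \geq \sqrt{|E|}$ (where $L_t \lesssim |E|/t$), the three types of terms contribute at most
\[O\bigl(|Y|^{2/3}(M|E|^{1/3} + c_1^{1/3}|E|)\bigr), \quad O(c_1 |E|\, |Y|), \quad \text{and} \quad O(M^2), \]
respectively, with a further $O(|E|\log|E|)$ arising from the high-$t$ tail of $\sum_t t L_t$.

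Choosing $c_1$ small enough to absorb the terms $c_1 |E| |Y|$ and $c_1^{1/3}|E|\,|Y|^{2/3}$ into the lower bound $|Y||E|/2$, we arrive at $|Y| \, |E| \lesssim |Y|^{2/3} M |E|^{1/3} + M^2$. Case analysis on which term dominates gives either $|Y| \lesssim M^3/|E|^2$ or $|Y| \lesssim M^2/|E|$; the hypothesis $M \leq c_2 |E|$ forces $M^3/|E|^2 \leq c_2 M^2/|E|$, so both cases produce $|Y| = O(M^2/|E|)$. The main technical obstacle is cleanly handling the dyadic tail at scales $t \in [\sqrt{|E|}, c_1|E|]$, where Szemer\'edi--Trotter supplies only $L_t = O(|E|/t)$: one can either absorb the resulting $\log|E|$ factor into the $O$-notation of the conclusion, or refine the count using the second-moment identity $\sum_\ell m_\ell(m_\ell - 1) = |E|(|E|-1)$ to bound the contribution of very-rich lines more tightly.
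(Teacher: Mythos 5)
Your architecture (lower- and upper-bounding a weighted point--line count and invoking Szemer\'edi--Trotter) is in the spirit of the paper's, but the decomposition is genuinely different: you restrict at the outset to $k$-rich lines with $k=|E|/(4M)$ and then sum incidence bounds between $Y$ and the rich lines over dyadic richness scales, whereas the paper keeps the full sum $\sum_{\ell}e(\ell)t(\ell)$ and splits it according to whether $e(\ell)$ exceeds an absolute constant and whether $t(\ell)=|\ell\cap Y|$ equals $1$ or is at least $2$. Your lower bound $\sum_{\ell\in\mathcal{L}_k}m_\ell\,|\ell\cap Y|\ge\tfrac12|Y|\,|E|$ is correct, and your treatment of the $(|Y|L_t)^{2/3}$ and $|Y|$ terms goes through (absorbing $c_1^{1/3}|E|\,|Y|^{2/3}$ into $|Y|\,|E|$ uses $|Y|\ge1$, which you may assume), as does the final case analysis.

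The gap is the term you flag yourself: the dyadic tail $\sum_{t\ge\sqrt{|E|}}tL_t=O(|E|\log|E|)$ coming from the ``$+|L|$'' term of Szemer\'edi--Trotter, and neither proposed repair closes it. Absorbing the logarithm yields only $|Y|=O(M^2|E|^{-1}+\log|E|)$, which is strictly weaker than the stated conclusion whenever $M\lesssim\sqrt{|E|\log|E|}$, a range the theorem covers. The second-moment identity makes the tail worse, not better: it gives $\sum_{m_\ell\ge\sqrt{|E|}}m_\ell\le|E|^{-1/2}\sum_\ell m_\ell^2\le2|E|^{3/2}$, whereas Szemer\'edi--Trotter already gives $O(|E|\log|E|)$; and restricting each scale to lines actually meeting $Y$ (so that $N_t\le\min(L_t,M|Y|)$) still leaves a logarithm from summing $O(|E|)$ over $\log$ many scales. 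The missing idea is the paper's handling of lines with $|\ell\cap Y|=1$: since each $y\in Y$ lies on fewer than $M$ lines meeting $E$, there are at most $M|Y|$ such lines in total, and a \emph{single} application of Szemer\'edi--Trotter with point set $E$ and this line set bounds their entire contribution by $O\bigl((M|Y|\,|E|)^{2/3}+M|Y|+|E|\bigr)$ --- no dyadic summation, hence no logarithm. The lines with $|\ell\cap Y|\ge2$ are then handled by Cauchy--Schwarz using $\sum_{e(\ell)\ge c}e(\ell)^2\le10^{-1}|E|^2$ together with $\sum_{t(\ell)\ge2}t(\ell)^2<2|Y|^2$, so their contribution is at most $5^{-1/2}|E|\,|Y|$ and is absorbed by the lower bound; note that the small constant $10^{-1}$ here is supplied by Szemer\'edi--Trotter and could not come from the second-moment identity, whose constant is too large to permit absorption. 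Replacing your dyadic treatment of the tail with this split on $|\ell\cap Y|$ would complete the argument.
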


One corollary to Theorem \ref{radial-real-intro} (in the spirit of (\ref{best-current-plane})) is that, for any finite set $E \subset \mathbb{R}^2$ with no more than $c_1|E|$ points contained in any line, there is a point $y \in E$ such that $|\pi^y(E)| = O(|E|)$.

\begin{theorem}\label{primefields2D} There exists a constant $0<c<1$ such that the following holds. 
Let $p$ be prime.
For any set $E\subset \mathbb{F}_p^2$ with $|E|\ll p^{8/5}$ and $|\ell\cap E|<c|E|$ for each line $\ell$ and $M\leq c^{11/7}|E|^{4/7}$, we have 
\[\#\{y \in \mathbb{F}_p^2 : |\pi^y(E)| < M\} < O(M^{11/4}|E|^{-1}).\]
\end{theorem}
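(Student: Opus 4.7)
The plan is to mirror the proof of Theorem~\ref{radial-real-intro}, substituting the Stevens--de Zeeuw incidence bound for Szemer\'edi--Trotter. Set $B = \{y \in \mathbb{F}_p^2 : |\pi^y(E)| < M\}$ and $K = |B|$; the goal is to establish $K = O(M^{11/4}|E|^{-1})$.

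First, for each $y \in B$ the points of $E$ are partitioned into fewer than $M$ nonempty groups by the lines through $y$, so Cauchy--Schwarz gives $\sum_{\ell \ni y} |\ell \cap E|^2 \geq |E|^2/M$. Summing over $y \in B$ yields $\sum_\ell |\ell \cap B| \cdot |\ell \cap E|^2 \gtrsim K|E|^2/M$. I would then dyadically decompose the lines by $|\ell \cap E|$ (setting $\mathcal{L}_k=\{\ell:2^k\leq|\ell\cap E|<2^{k+1}\}$) and pigeonhole over the $O(\log|E|)$ scales to produce a scale $T=2^{k^*}$ and a subset $B^*\subseteq B$ with $|B^*|\gtrsim K/\log|E|$, such that each $y\in B^*$ lies on $\gtrsim |E|/(T\log|E|)$ lines of $\mathcal{L}_{k^*}$. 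This gives the lower bound $I(B^*,\mathcal{L}_{k^*})\gtrsim K|E|/(T\log^2|E|)$, while the assumption $|\pi^y(E)|<M$ forces $T\gtrsim |E|/M$.

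The next step is to apply Stevens--de Zeeuw to $(E,\mathcal{L}_{k^*})$ via $I(E,\mathcal{L}_{k^*})\geq T|\mathcal{L}_{k^*}|$ to get the rich-line bound $|\mathcal{L}_{k^*}|\lesssim |E|^{11/4}/T^{15/4}+|E|/T$, and then to apply it again to $(B^*,\mathcal{L}_{k^*})$ to obtain
\[
I(B^*,\mathcal{L}_{k^*})\;\lesssim\; K^{11/15}|\mathcal{L}_{k^*}|^{11/15}+K+|\mathcal{L}_{k^*}|.
\]
The hypothesis $|E|\ll p^{8/5}$ is exactly what is required to guarantee that Stevens--de Zeeuw is available in this parameter range.

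Comparing the lower and upper bounds on $I(B^*,\mathcal{L}_{k^*})$, with a case analysis of which term on the right dominates, the constraint $M\leq c^{11/7}|E|^{4/7}$ should be tuned precisely so that the $|\mathcal{L}_{k^*}|$ term wins, yielding
\[
\frac{K|E|}{T\log^2|E|}\;\lesssim\;\frac{|E|^{11/4}}{T^{15/4}}.
\]
Rearranging and substituting $T\gtrsim|E|/M$ then gives $K\lesssim M^{11/4}|E|^{-1}$ up to logarithmic factors, which can be absorbed into the $O$-constant by a slightly finer pigeonhole argument or by choosing the threshold directly instead of dyadically. The main obstacle is the case analysis at the end: the term $|\mathcal{L}_{k^*}|$ must be the dominant one on the right side of Stevens--de Zeeuw, rather than the main term $K^{11/15}|\mathcal{L}_{k^*}|^{11/15}$, and the exponent $4/7$ in the hypothesis on $M$ (together with the $c^{11/7}$ constant) appears to be calibrated precisely so that this is forced. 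Checking this dominance carefully, verifying the Stevens--de Zeeuw applicability using $|E|\ll p^{8/5}$, and cleaning up the logarithms are where the delicate work lies.
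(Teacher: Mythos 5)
Your overall framework (double counting incidences between the exceptional set and the $E$-rich lines, with Stevens--de Zeeuw replacing Szemer\'edi--Trotter) is in the right spirit, but the step you yourself flag as delicate --- the case analysis after applying Stevens--de Zeeuw to $(B^*,\mathcal{L}_{k^*})$ --- does not close, and the claim that the hypothesis $M\leq c^{11/7}|E|^{4/7}$ forces the $|\mathcal{L}_{k^*}|$ term to dominate is not correct. Run the numbers in the worst case $T\approx |E|/M$, where $|\mathcal{L}_{k^*}|\lesssim |E|^{11/4}T^{-15/4}\approx M^{15/4}|E|^{-1}$: if the main term $K^{11/15}|\mathcal{L}_{k^*}|^{11/15}$ dominates, comparing with your lower bound on $I(B^*,\mathcal{L}_{k^*})$ gives only $K\lesssim M^{105/16}|E|^{-11/4}$, which exceeds the target $M^{11/4}|E|^{-1}$ precisely when $M\gg |E|^{28/61}$; at the endpoint $M\approx |E|^{4/7}$ it degenerates to $K\lesssim |E|$ versus the claimed $K\lesssim |E|^{4/7}$. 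Since the main term genuinely can dominate in this range (it does whenever $K^{11/4}\geq |\mathcal{L}_{k^*}|$, which is exactly the situation one must rule out when assuming $K$ is large for contradiction), your argument proves the theorem only for $M\lesssim |E|^{28/61}$, not in the full stated range. Two secondary problems: the variant of Stevens--de Zeeuw quoted in the paper requires the \emph{point set} to have size at most $p^{8/5}$, so applying it with $B^*$ as the point set needs an a priori bound $|B^*|\ll p^{8/5}$ that you do not have (only $|E|\ll p^{8/5}$ is assumed); and your intermediate claim that each $y\in B^*$ lies on $\gtrsim |E|/(T\log|E|)$ lines of $\mathcal{L}_{k^*}$ overstates the pigeonhole output, which only gives $\gtrsim |E|^2/(MT^2\log|E|)$ lines (these coincide only when $T\approx |E|/M$).

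The paper avoids all of this by pairing the incidence bound differently: it always uses $E$ as the point set. Lines are split according to $e(\ell)<c_1:=c^{-4/7}|E|^{3/7}$, $e(\ell)\geq c_1$ with $t(\ell)>1$, and $e(\ell)\geq c_1$ with $t(\ell)=1$. The first class contributes at most $c_1M|T|\leq c|E|\,|T|$; the second is handled by Cauchy--Schwarz using $\sum_{t(\ell)>1}t(\ell)^2<2|T|^2$ together with a summation-by-parts consequence of the Stevens--de Zeeuw rich-line bound, namely $\sum_{c_1\leq k\leq c|E|}k^2|L_{=k}|\leq 10^{-1}|E|^2$ (no dyadic pigeonholing, hence no logarithms); and the third class has at most $M|T|$ lines, so Stevens--de Zeeuw applied to $(E,\{\ell:t(\ell)=1\})$ gives $O\bigl((M|T|\,|E|)^{11/15}+M|T|+|E|\bigr)$. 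Balancing against the lower bound $|E|\,|T|$ yields $|T|=O(M^{11/4}|E|^{-1})$ with the exponent $11/15$ landing on $|T|$ rather than on $|\mathcal{L}|$, which is what makes the bookkeeping work up to $M\approx |E|^{4/7}$. The exponent $4/7$ arises from requiring $c_1^{-7/4}|E|^{11/4}\lesssim |E|^2$ (forcing $c_1\gtrsim |E|^{3/7}$) while keeping $c_1M\leq c|E|$, not from a dominance condition in your sense. To repair your argument you would need to replace the application of Stevens--de Zeeuw to $(B^*,\mathcal{L}_{k^*})$ with one in which $E$ is the point set.
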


We conjecture that the conclusion of Theorem \ref{radial-real-intro} holds for sets $E \subset \mathbb{F}_p^2$ with $|E| \ll p$ and no more than $c|E|$ points in any line.

{\bf The Falconer distance problem:} Another motivation of this work that we want to emphasize here is a connection between this topic and the Falconer distance problem, which is one of central problems in Geometric Measure Theory. Given a compact set $E$ in $\mathbb{R}^d$, the Falconer distance conjecture states that if the Hausdorff dimension of $E$ is greater than $d/2$, then its distance set has positive Lebesgue measure. The recent breakthrough of Guth, Iosevich, Ou, and Wang \cite{alex-fal} says that in two dimensions the conclusion holds when $\dim_H(E)>5/4$. In higher dimensions, the best current dimensional thresholds are as follows:
\begin{itemize}
\item $d=3$, Du, Guth, Ou, Wang, Wilson, and Zhang \cite{Du1} (2017): $\frac{3}{2}+\frac{3}{10}$
\item $d\ge 4$ even, Du, Iosevich, Ou, Wang, and Zhang \cite{Du3} (2020): $\frac{d}{2}+\frac{1}{4}$
\item $d\ge 5$ odd, Du and Zhang \cite{Du2} (2018): $\frac{d}{2}+\frac{d}{4d-2}$
\end{itemize}
The finite field variant of this problem is known as the Erd\H{o}s-Falconer distance problem, which asks for the smallest exponent $\alpha$ such that for any $E\subset \mathbb{F}_q^d$, if $|E|\ge q^{\alpha}$, then the distance set covers a positive proportion of all distances. In 2005, Iosevich and Rudnev \cite{IR07} showed that if $|E|\gg q^{\frac{d+1}{2}}$, $E$ determines a positive proportion of all distances. Unlike the continuous version, the exponent $\frac{d+1}{2}$ has been proved to be sharp in \cite{HIKR10} except for even dimensions or dimensions $d\equiv 3\mod 4$ with $q\equiv 3\mod 4$.
For these cases, the conjectured exponent is $d/2$, which is in line with the Falconer distance conjecture. Compared to the continuous version, only a few improvements has been able to make during the last 15 years. In particular, Chapman, Erdogan, Hart, Iosevich, and Koh \cite{CEHIK10} proved the exponent $\frac{4}{3}$ over arbitrary finite fields, which was recently improved to $\frac{5}{4}$ over prime fields by Murphy, Petridis, Pham, Rudnev, and Stevens \cite{murphy}. However, in higher even dimensions, we know nothing except the exponent $(d+1)/2$. Since one of the key steps in the papers \cite{Du3, alex-fal} is a radial projection theorem due to Orponen \cite{o2}, if one wishes to adapt the methods from $\mathbb{R}^d$ to $\mathbb{F}_q^d$, then a full understanding about the radial projection over finite fields is needed. Thus, the results in this paper provide some new progress in this direction. 

{\bf An update on Conjectures \ref{conj12} and \ref{conjec}:} In a very recent paper \cite{OS}, Orponen and Shmerkin proved that Conjecture \ref{conj12} and Conjecture \ref{conjec} hold in two dimensions. We refer the reader to their paper for more details and discussions.

\section{Notation and definitions}
	Here we fix notation and definitions that we use in several proofs.
	
	For $E\subset \mathbb{F}_q^d$ and an integer $M>0$, let
	\[T := \{y \in \mathbb{F}_q^d: |\pi^y(E | \leq M\}. \]
	For any line $\ell$, let $e(\ell) = |\ell \cap E|$, let $t(\ell) = |\ell \cap T|$, and let $\ell(x)$ be the indicator function for $x \in \ell$.
	Let $L$ be the set of lines that each contain at least one point of $T$ and at least one point of $E$, and let $G$ be the set of all lines in $\mathbb{F}_q^d$. We also use the notation $\binom{d}{1}_q$, which is defined by 
	\[\binom{d}{1}_q:=\frac{q^d-1}{q-1}.\]
	Note that $\binom{d}{1}$ is the number of lines in $\mathbb{F}_q^d$ that are incident to a specified point.

\section{Proofs of Theorem \ref{th:justCS-intro}, Theorem \ref{prop1.6}, and Theorem \ref{th:3.3intro}}

The proofs in this section use the Cauchy-Schwarz inequality together with the fact that there is a unique line through each pair of distinct points.

The following lemma immediately implies Theorem \ref{th:justCS-intro}, and plays an important role in the proof of Theorem \ref{th:3.3intro}.

\begin{lemma}\label{lem:TOffALine}
	Let $E \subset \mathbb{F}^d$ be a finite set of points, let $1 < C < |E|$, let $k$ be a positive integer, and let $T$ be a set of points such that
	\begin{itemize}
		\item for each point $y \in T$, we have $|\pi^y(E)| < |E|C^{-1}$, and
		\item for any line $\ell$, we have $|\ell \cap T| < k$.
	\end{itemize}
	Then,
	\[|T| \leq (C-1)^{-1}k|E|. \]
\end{lemma}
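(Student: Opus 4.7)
The plan is a double count of the set of ordered collinear triples
\[\mathcal{N} := \bigl\{(y, x_1, x_2) : y \in T,\ x_1, x_2 \in E \setminus \{y\},\ x_1 \neq x_2,\ y \in \ell_{x_1, x_2}\bigr\}.\]
The lower bound on $|\mathcal{N}|$ will come from the hypothesis $|\pi^y(E)| < |E|/C$, and the upper bound from $|\ell \cap T| < k$.

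For the lower bound, I would fix $y \in T$ and apply Cauchy--Schwarz to the distribution of $E \setminus \{y\}$ among the lines through $y$. Setting $n(\ell) := |\ell \cap (E \setminus \{y\})|$, the non-zero $n(\ell)$ are supported on exactly $|\pi^y(E)| < |E|/C$ lines through $y$ and sum to $|E| - [y \in E]$, whence
\[\sum_{\ell \ni y} n(\ell)\bigl(n(\ell)-1\bigr) \,\geq\, \frac{(|E|-[y\in E])^2}{|\pi^y(E)|} - (|E| - [y \in E]) \,>\, (C-1)|E| - O(1),\]
the $O(1)$ correction appearing only when $y \in E$. Summing over $y \in T$ yields a bound of the form $|\mathcal{N}| > (C-1)|E| \cdot |T| - (2C - 1)|T \cap E|$.

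For the upper bound, I would exploit that two distinct points of $E$ lie on a unique line. Re-indexing $\mathcal{N}$ by the ordered pair $(x_1, x_2) \in E^2$, using $t(\ell) \leq k-1$, and observing that $\sum_\ell e(\ell)(e(\ell)-1) = |E|(|E|-1)$ by the uniqueness of the line through two points, one obtains
\[|\mathcal{N}| \,=\, \sum_\ell t(\ell)\,e(\ell)(e(\ell)-1) - 2(|E|-1)|T \cap E| \,\leq\, (k-1)|E|(|E|-1) - 2(|E|-1)|T \cap E|,\]
the subtraction accounting for ordered pairs $(x_1,x_2)$ that happen to contain a point of $T \cap E$.

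Combining the two bounds, the net coefficient of $|T \cap E|$ on the right becomes $2C - 2|E| + 1$, which is non-positive under the hypothesis $C < |E|$; dropping that non-positive term leaves $(C-1)|E| \cdot |T| \leq (k-1)|E|(|E|-1)$, whence $|T| \leq (k-1)(|E|-1)/(C-1) \leq k|E|/(C-1)$. The chief technical obstacle is precisely this bookkeeping of the $y \in T \cap E$ terms: the $O(1)$ Cauchy--Schwarz error on the lower side and the $2(|E|-1)|T \cap E|$ over-counting on the upper side must combine with the correct sign, and arranging this cancellation is what forces the hypothesis $C < |E|$ to appear.
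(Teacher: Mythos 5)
Your proposal is essentially the paper's own proof: both double count collinear triples in $T\times E\times E$, get the lower bound from Cauchy--Schwarz applied to the distribution of $E$ over the lines through a fixed $y\in T$, and get the upper bound from the fact that two distinct points of $E$ determine a unique line carrying fewer than $k$ points of $T$. (The only cosmetic difference is that the paper keeps the diagonal pairs $x_1=x_2$ and works with $\sum_{\ell\ni y}(e(\ell)-\mathbf{1}_{y\in E})^2$, while you exclude them; this only shifts the bookkeeping constants.)

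One step as written is not right, though it is repairable with quantities you already computed. You claim the net coefficient of $|T\cap E|$, namely $2C-2|E|+1$, is non-positive because $C<|E|$; that fails for $C\in(|E|-\tfrac12,|E|)$. The problem is that you discarded the $+C/|E|$ term from the $y\in E$ case of Cauchy--Schwarz: keeping it, the per-point lower-bound correction is $-2C+1+C/|E|$ rather than $-(2C-1)$, and the net coefficient of $|T\cap E|$ becomes
\[
\bigl(2C-1-C|E|^{-1}\bigr)-2(|E|-1)=(|E|-C)\bigl(|E|^{-1}-2\bigr)\le 0,
\]
which is genuinely non-positive whenever $C<|E|$ and $|E|\ge 1$. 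With that correction the argument closes exactly as you describe, and in fact yields the marginally better constant $(C-1)^{-1}(k-1)(|E|-1)$.
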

\begin{proof}
		The strategy is to find upper and lower bounds on the number of triples in 
	\[ R := \{(y,x_1,x_2) \in T \times E \times E: y,x_1,x_2 \text{ are co-linear and } y \notin \{x_1, x_2\}\}.\]
	Denote by $M = |E|C^{-1}$ the assumed upper bound on $\pi^y(E)$ for $y \in T$.
	To obtain a lower bound on $|R|$, we bound the number of triples in $R$ that contain a fixed $y \in T$:
	\[ \sum_{\substack{\ell: y \in \ell}} (e(\ell) - \mathbf{1}_{y \in E})^2 \geq M^{-1} \left(\sum_{\substack{\ell: y \in \ell}} (e(\ell) - \mathbf{1}_{y \in E}) \right)^2 \geq M^{-1}(|E|-M\cdot \mathbf{1}_{y \in E})^2.\]
	Summing over all $y \in T$:
	\[|R| = \sum_{y \in T}\sum_{\ell: y\in\ell} (e(\ell)-\mathbf{1}_{y \in E})^2 \geq \sum_{y \in T} M^{-1}(|E|- M \cdot \mathbf{1}_{y \in E})^2 = C|T||E| - 2|E||T \cap E| + M|T\cap E|. \] 
	
	The upper bound relies on the observation that each pair of distinct points in $E$ belongs to at most $k$ triples in $R$:
	\begin{align*}
	|R| &= \sum_{x \in E} \sum_{\substack{y \in T \\ x \neq y}} 1 + \sum_{\substack{x_1,x_2 \in E\\ x_1 \neq x_2}} \sum_{\substack{y \in T \\ y \notin \{x_1,x_2\}}} \sum_{\ell \in G}\ell(x_1)\ell(x_2)\ell(y)
	\\ &\leq |T|\,|E| - |E \cap T|  + \sum_{\substack{x_1,x_2 \in E\\ x_1 \neq x_2}} \sum_{\ell \in G} \ell(x_1)\ell(x_2) \sum_{\substack{y \in T \\ y \notin \{x_1,x_2\}}} \ell(y) \\&= |T|\,|E| - |E \cap T|  + \sum_{\substack{x_1,x_2 \in E\\ x_1 \neq x_2}} \sum_{\ell \in G} \ell(x_1)\ell(x_2)(t(\ell)- \mathbf{1}_{x_1 \in T} - \mathbf{1}_{x_2 \in T})
	\\ &\leq |T|\,|E| - |E \cap T| +k|E|^2 - \sum_{\substack{x_1,x_2 \in E\\ x_1 \neq x_2}}  (\mathbf{1}_{x_1 \in T} + \mathbf{1}_{x_2 \in T}) \\
	&= |T|\,|E| - |E \cap T| + k|E|^2 - 2(|E \cap T|(|E \cap T| - 1) + (|E|-|E \cap T|)|E \cap T|)\\
	&= |T|\,|E| + k|E|^2 - 2 |E| \, |E \cap T| + |E \cap T|.\end{align*}
	Combining the lower and upper bounds, we get
	\begin{align*}C|T|\,|E|-2|E|\,|T\cap E| + M|T\cap E| &\leq |T|\,|E| + k|E|^2 - 2|E|\,|E\cap T| + |E \cap T|,\end{align*}
	hence $|T| < (C-1)^{-1}k|E|$.
\end{proof}
As stated above, Lemma \ref{lem:TOffALine} immediately implies theorem \ref{th:justCS-intro}.

\begin{proof}[Proof of Theorem \ref{th:justCS-intro}]
	At most $q$ points of $T$ are contained in any line in $\mathbb{F}_q^d$.
	Hence, Theorem \ref{th:justCS-intro} follows from the case $k=q$ of Lemma \ref{lem:TOffALine}.
\end{proof}

Next we prove Theorem \ref{th:3.3intro}.
Lemma \ref{lem:TOffALine} gives a bound on $|T|$ when not too many points of $T$ are on any single line.
The next lemma bounds the number of elements of $T$ that can be contained in a single line.

\begin{lemma}\label{lem:TOnALine}
	Let $\mathbb{F}$ be a field, and let $E \subset \mathbb{F}^d$ be a finite set of points.
	Let $\ell$ be a line such that $\ell \cap E = \emptyset$.
	Then, for $M<|E|/2$,
	\[|\ell \cap \{y \in \mathbb{F}^d: |\pi^y(E)| \leq M\}| \leq 2M.\]
\end{lemma}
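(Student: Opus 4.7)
The plan is a double-counting argument on ordered triples $(y, x_1, x_2)$ with $y \in \ell \cap T$, $x_1, x_2 \in E$, $x_1 \neq x_2$, and all three points collinear; call this set of triples $R$.

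First I would observe that since $\ell \cap E = \emptyset$, every $y \in \ell \cap T$ lies outside $E$, so $|\pi^y(E)| \leq M$ says that the points of $E$ are partitioned among at most $M$ lines through $y$. For each such $y$, indexing these lines by $\lambda$ and writing $e(\lambda) = |\lambda \cap E|$, the relation $\sum_{\lambda \ni y} e(\lambda) = |E|$ together with Cauchy-Schwarz gives
\[\sum_{\lambda \ni y} e(\lambda)^2 \geq \frac{|E|^2}{M},\]
so the number of ordered pairs $(x_1,x_2) \in E \times E$ with $x_1 \neq x_2$ and both collinear with $y$ is at least $|E|^2/M - |E| = |E|(|E|-M)/M$. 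Summing over $y \in \ell \cap T$:
\[|R| \geq |\ell \cap T| \cdot \frac{|E|(|E|-M)}{M}.\]

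For the upper bound, I would use the unique-line axiom: any pair of distinct points $x_1,x_2 \in E$ lies on a unique line $\lambda_{x_1 x_2}$, and this line is not $\ell$ (because $\ell$ meets $E$ in zero points but $\lambda_{x_1 x_2}$ meets it in two). Hence $\lambda_{x_1 x_2} \cap \ell$ contains at most one point, which is the only candidate for $y$. Thus
\[|R| \leq |E|(|E|-1).\]

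Combining the two bounds yields
\[|\ell \cap T| \leq \frac{M(|E|-1)}{|E|-M},\]
and since the hypothesis $M < |E|/2$ gives $|E|-M > |E|/2$, the right-hand side is strictly less than $2M$, which establishes the lemma. I do not foresee a real obstacle here; the only point needing care is confirming that $y \notin E$ (supplied by $\ell \cap E = \emptyset$) so that $\pi^y$ applied to all of $E$ is well-defined and the $M$-bound on its image genuinely controls the number of lines through $y$ hitting $E$.
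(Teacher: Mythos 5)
Your proposal is correct and follows essentially the same route as the paper: both count the triples $(y,x_1,x_2)$ with $y\in\ell\cap T$ and $x_1\neq x_2$ in $E$ collinear with $y$, get the lower bound $|\ell\cap T|\,(|E|^2/M-|E|)$ via Cauchy--Schwarz over the at most $M$ lines through each $y$ that meet $E$, and the upper bound $|E|(|E|-1)$ from the fact that the line through a pair of distinct points of $E$ meets $\ell$ in at most one point. The only cosmetic difference is that you carry the slightly sharper quantity $M(|E|-1)/(|E|-M)$ before invoking $M<|E|/2$, whereas the paper absorbs the factor $(1-M|E|^{-1})\geq 1/2$ directly into the lower bound.
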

\begin{proof}
Let $T$ be the set of $y\in \mathbb{F}^d$ such that $|\pi^y(E)| \leq M$ and $L$ be the set of lines that contain at least one point of $T$ and at least one point of $E$. Let
	\[R:= \{(y,x_1,x_2) \in (T\cap \ell) \times E \times E: y \neq x_1 \neq x_2 \text{ and } y, x_1, x_2 \text{ are co-linear}\}. \]
	Since each ordered pair $(x_1,x_2) \in E^2$ of distinct points in $E$ determines a line that intersects $\ell$ in at most one point, we have
	\begin{equation} \label{eq:upperBoundRForCollinear} |R| \leq |E|(|E|-1) < |E|^2. \end{equation}
	On the other hand, we have 
	\begin{align}
	|R| &= \sum_{y \in T\cap \ell} \sum_{\substack{\ell' \in L \\ y \in \ell'}} e(\ell')(e(\ell') - 1)\nonumber\\ &\geq |T\cap \ell|M^{-1}|E|^2 - |T\cap \ell|\,|E| = |T\cap \ell||E|^2M^{-1}(1-M|E|^{-1})\nonumber\\ &\geq 2^{-1}|T\cap \ell|\,|E|^2M^{-1}.\label{eq:lowerBoundRForCollinear}
	\end{align}
	Here, we use the Cauchy-Schwarz inequality and the fact that each point of $T$ is incident to at most $M$ lines of $L$.
	
	The result follows directly from (\ref{eq:upperBoundRForCollinear}) and (\ref{eq:lowerBoundRForCollinear}).
\end{proof}

Theorem \ref{th:3.3intro} follows easily from Lemmas \ref{lem:TOffALine} and \ref{lem:TOnALine}.

\begin{proof}[Proof of Theorem \ref{th:3.3intro}]
	Let $\ell$ be a line.
	Since there are at least $|E|/2$ points of $E$ that do not lie on $\ell$ and $M < |E|/4$, Lemma \ref{lem:TOnALine} implies that there are at most $2M$ points of $T$ contained in $\ell$.
	Consequently, Lemma \ref{lem:TOffALine} applied with $C = |E|M^{-1}$ and $k = 2M$ implies that
	\[|T| \leq (|E|M^{-1}-1)^{-1}2M|E| < 4M^2.  \]
\end{proof}

\subsection{Proof of Theorem \ref{prop1.6}}
As promised in the introduction, the proof of Theorem \ref{prop1.6} is very simple.

\begin{proof}[Proof of Theorem \ref{prop1.6}]
Let $T$ be the set of points such that $|\pi^y(E)| < |E|^{1/2}$, and let $M= 2^{-1}|E|^{1/2}-1$.
Let $L$ be the set of lines that contain at least one point of $T$ and at least one point of $E$.

Suppose, for contradiction, that $y,z \in T$ with $y \neq z$.
Let $L_y,L_z \subset L$ be those lines of $L$ incident to $y$ and $z$, respectively.
Let $\ell_M$ be the line that contains both $y$ and $z$.
Let $\ell \in L_y \setminus \{\ell_M\}$.
Then each point of $E \cap \ell$ is on a different line of $L_z$, hence $|\ell \cap E| \leq M$.
Since $|L_y \setminus \{\ell_M\}| \leq M$, we have that $\left |\bigcup_{\ell \in L_y \setminus \{\ell_M\}} (L_y \cap E) \right | \leq M^2$.
Since $M^2 < (1/4)|E|$, it must be the case that $|\ell_M \cap E| > (3/4)|E|$, contradicting our assumption.
\end{proof}

\section{Proofs of Theorems \ref{th:largeESC} and \ref{th:largeTSC}}

The proofs in this section use the Cauchy-Schwarz inequality together with several combinatorial properties of points and lines in $\mathbb{F}_q^d$.
In particular, each point is incident to $\binom{d}{1}_q$ lines, each line is incident to $q$ points, and $\mathbb{F}_q^d$ has $q^d$ points in total.

The proofs of Theorems \ref{th:largeESC} and \ref{th:largeTSC} are similar.
The basic plan of both proofs is to give upper and lower bounds on $\sum_{\ell \in L} e(\ell)t(\ell)$.

We use the same simple lower bound for both proofs.
Each pair $(x,y) \in E \times T$ is contained in exactly one line of $L$ if $x \neq y$, and at least one line of $L$ if $x = y$.
Hence,
\begin{equation}\label{eq:etLowerBound}
\sum_{\ell \in L} e(\ell)t(\ell) \geq |E|\,|T|.
\end{equation}

The upper bounds on $\sum_{\ell \in L} e(\ell)t(\ell)$ used in the two proofs of this section rely on different arguments, but both are inspired by work on incidence bounds for large sets in finite space.
These bounds were first investigated by Haemmers \cite{Ham}, and have had many recent applications to problems in arithmetic combinatorics and Erd\H{o}s-type questions over finite vector spaces \cite{YMRS, BIP, MPRRS, RS, RRS}.

The following lemma and its proof is nearly identical to Lemma 1 in \cite{MP2016}.
\begin{lemma}\label{lem:MP}
Let $E \subset \mathbb{F}_q^d$.
Then,
\begin{align}
    \label{eq:e2}&\sum_{\ell \in G} e(\ell)^2 = \binom{d}{1}_q |E| + |E|(|E|-1), \text{ and}\\
    \label{eq:eVar}&\sum_{\ell \in G} (e(\ell) - |E|q^{1-d})^2 \leq \binom{d}{1}_q|E|.
\end{align}
\end{lemma}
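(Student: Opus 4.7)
The plan is to establish both parts of Lemma \ref{lem:MP} by a direct double-counting argument, relying only on the facts that each point of $\mathbb{F}_q^d$ lies on exactly $\binom{d}{1}_q$ lines, each line contains $q$ points, and any two distinct points determine a unique line. In particular, by double-counting point-line incidences over all of $\mathbb{F}_q^d$, one has $|G| = q^{d-1}\binom{d}{1}_q$, a fact I will need for the second identity.

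For identity \eqref{eq:e2}, I would interpret $\sum_{\ell \in G} e(\ell)^2$ as the number of triples $(\ell,x_1,x_2) \in G \times E \times E$ with $x_1, x_2 \in \ell$, and split according to whether $x_1 = x_2$. The diagonal contribution is $\sum_\ell e(\ell)$, which counts incidences between $E$ and $G$, and so equals $\binom{d}{1}_q |E|$. The off-diagonal contribution is $\sum_\ell e(\ell)(e(\ell)-1)$, which counts ordered pairs of distinct points of $E$ lying on a common line; since each such pair determines a unique line, this equals exactly $|E|(|E|-1)$. Adding these gives \eqref{eq:e2}.

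For \eqref{eq:eVar}, I would expand the square and use the three identities above:
\[\sum_{\ell \in G}(e(\ell) - |E|q^{1-d})^2 = \sum_\ell e(\ell)^2 - 2|E|q^{1-d}\sum_\ell e(\ell) + |E|^2 q^{2-2d}|G|.\]
Substituting \eqref{eq:e2}, $\sum_\ell e(\ell) = \binom{d}{1}_q |E|$, and $|G| = q^{d-1}\binom{d}{1}_q$, the last two terms combine into a single contribution $-\binom{d}{1}_q|E|^2 q^{1-d}$ (this cancellation is the reason the specific normalizer $|E|q^{1-d}$ is chosen — it is essentially the average value of $e(\ell)$). The desired bound then reduces to the elementary inequality $|E|^2 - |E| \leq \binom{d}{1}_q |E|^2 q^{1-d}$, equivalently $1 - 1/|E| \leq (q^d-1)/(q^{d-1}(q-1))$, whose right-hand side is at least $1$.

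There is no real obstacle: the whole argument is a short second-moment computation of the type standard in incidence theory over finite fields, and the proof should essentially follow that of \cite{MP2016} verbatim.
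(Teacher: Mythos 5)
Your proof is correct and follows essentially the same route as the paper: identity \eqref{eq:e2} by splitting the double count over $E\times E$ into diagonal and off-diagonal pairs, and \eqref{eq:eVar} by expanding the square and using $\sum_\ell e(\ell)=\binom{d}{1}_q|E|$ together with $|G|=q^{d-1}\binom{d}{1}_q$. Your explicit verification of the final elementary inequality $|E|^2-|E|\leq q^{1-d}\binom{d}{1}_q|E|^2$ is a welcome detail the paper leaves implicit.
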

\begin{proof}
The following calculation yields (\ref{eq:e2}):
\begin{align*}
    \sum_{\ell \in G} e(\ell)^2 
    &= \sum_{\ell \in G} \sum_{(x,x') \in E^2} \ell(x) \ell(x') \\
    &= \sum_\ell \sum_{x \in E} \ell(x) + \sum_\ell \sum_{\substack{(x,x') \in E^2: \\x \neq x'}} \ell(x)\ell(x') \\
    &= \binom{d}{1}_q |E| + |E|(|E|-1).
\end{align*}
In the third line, we use the facts that each point is incident to exactly $\binom{d}{1}_q$ lines, and each pair of distinct points is contained in exactly one line.

We now show that (\ref{eq:eVar}) follows from (\ref{eq:e2}). Indeed, 
\begin{align*}
    \sum_{\ell \in G}(e(\ell) - |E|q^{-(d-1)})^2 &= \sum_{\ell \in G} e(\ell)^2 - 2 |E|q^{-(d-1)} \sum_{\ell \in G}e(\ell) + |G| |E|^2q^{-2(d-1)}  \\
    &= \sum_{\ell \in G} e(\ell)^2 - q^{-(d-1)}\binom{d}{1}_q |E|^2 \\
    &< \binom{d}{1}_q |E|.
\end{align*}
In the second line, we use the facts that each point is incident to exactly $\binom{d}{1}_q$ lines, and that $|G| = q^{d-1}\binom{d}{1}_q$.
\end{proof}

\subsection{Proof of Theorem \ref{th:largeESC}}
First we obtain an upper bound on $\sum_{\ell \in L} e(\ell)t(\ell)$.
\begin{lemma}\label{lem:largeEUpperBound}
We have
\begin{equation}\label{eq:largeEUpperBound}
    \sum_{\ell \in L} e(\ell)t(\ell) \leq (M q^{-(d-1)})|E|\,|T| + \sqrt{(M|T|+|T|^2)|E|\binom{d}{1}_q}.
\end{equation}
\end{lemma}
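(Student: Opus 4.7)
The plan is a Cauchy--Schwarz argument around the variance of $e(\ell)$ provided by Lemma \ref{lem:MP}. I would split the sum around its expected value:
\[\sum_{\ell \in L} e(\ell) t(\ell) = |E| q^{-(d-1)} \sum_{\ell \in L} t(\ell) + \sum_{\ell \in L} \bigl(e(\ell) - |E| q^{-(d-1)}\bigr) t(\ell),\]
and treat the average part and the deviation part separately. For the average part, I would observe that the lines of $L$ through a point $y \in T$ are precisely those that contain a point of $E$; there are at most $|\pi^y(E)| \leq M$ of them, so summing over $y \in T$ gives $\sum_{\ell \in L} t(\ell) \leq M|T|$. Multiplying by $|E|q^{-(d-1)}$ produces exactly the main term $M q^{-(d-1)} |E| |T|$.

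For the deviation part, Cauchy--Schwarz gives
\[\sum_{\ell \in L} \bigl(e(\ell) - |E|q^{-(d-1)}\bigr) t(\ell) \leq \left(\sum_{\ell \in L}\bigl(e(\ell) - |E|q^{-(d-1)}\bigr)^2\right)^{1/2} \left(\sum_{\ell \in L} t(\ell)^2\right)^{1/2}.\]
I would enlarge the first sum from $L$ to $G$ at no cost and apply \eqref{eq:eVar} of Lemma \ref{lem:MP} to bound it by $\binom{d}{1}_q |E|$. For the second factor I would expand $t(\ell)^2$ by double-counting ordered pairs $(y,y') \in T^2$ lying on a common $\ell \in L$: the diagonal $y = y'$ recovers $\sum_{\ell \in L} t(\ell) \leq M|T|$, while the off-diagonal contribution is at most $|T|(|T|-1) \leq |T|^2$ since two distinct points determine a unique line. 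This yields $\sum_{\ell \in L} t(\ell)^2 \leq M|T| + |T|^2$. Multiplying the two square roots and adding the average part assembles the claimed bound.

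The step I expect to require the most care is the estimate $\sum_{\ell \in L} t(\ell) \leq M|T|$. Strictly, if $y \in T \cap E$ then every one of the $\binom{d}{1}_q$ lines through $y$ already contains a point of $E$, namely $y$ itself, so the bound coming from $|\pi^y(E)| \leq M$ only controls lines through $y$ that meet $E \setminus \{y\}$. Under the hypotheses $|E| \geq 6q^{d-1}$ and $M \leq q^{d-1}/4$ of Theorem \ref{th:largeESC}, any excess contribution from $T \cap E$ is negligible compared to the main terms, so this technicality will not affect the final inequality; nevertheless, it is the one place in the argument where I would want to verify the bookkeeping carefully before declaring the proof finished.
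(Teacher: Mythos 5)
Your proof is correct and follows essentially the same route as the paper's: the same split of $e(\ell)$ around its mean $|E|q^{-(d-1)}$, the same Cauchy--Schwarz step with the sum over $L$ enlarged to $G$ so that \eqref{eq:eVar} applies, and the same count $\sum_{\ell\in L}t(\ell)^2\le M|T|+|T|^2$. The $T\cap E$ subtlety you flag (a point $y\in T\cap E$ lies on all $\binom{d}{1}_q$ lines of $L$, not merely $M$ of them) is genuine, but the paper's own proof simply asserts that each point of $T$ is incident to at most $M$ lines of $L$ without addressing it, so your treatment is no less complete than the original.
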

\begin{proof}
Since each point of $T$ is incident to at most $M$ lines of $L$, we have $\sum_{\ell \in L} t(\ell) \leq M|T|$, and hence
\begin{align}\sum_{\ell \in L} \nonumber e(\ell)t(\ell) & \leq  \frac{M|E|\,|T|}{q^{d-1}} + \sum_{\ell \in L}t(\ell)\left(e(\ell) - |E|q^{-(d-1)}\right) \\
\nonumber &\leq \frac{M|E|\,|T|}{q^{d-1}} + \sqrt{\sum_{\ell \in L}t(\ell)^2 \cdot \sum_{\ell \in L}\left(e(\ell) - |E|q^{-(d-1)}\right)^2 } \\
\label{eq:intermediateLargeE} &\leq \frac{M|E|\,|T|}{q^{d-1}} + \sqrt{\sum_{\ell \in L}t(\ell)^2 \cdot \sum_{\ell \in G}\left(e(\ell) - |E|q^{-(d-1)}\right)^2 }.
\end{align}
Lemma \ref{lem:MP} provides an upper bound for $\sum_{\ell \in G}\left(e(\ell) - |E|q^{-(d-1)}\right)^2$, so we only need to bound $\sum_{\ell \in L}t(\ell)^2$.
\begin{align}
    \nonumber \sum_{\ell \in L} t(\ell)^2 
    &= \sum_{\ell \in L} \sum_{(y,y') \in T^2} \ell(y) \ell(y') \\
    \nonumber &= \sum_{\ell \in L} t(\ell) + \sum_\ell \sum_{\substack{(y,y') \in T^2: \\y \neq y'}} \ell(y)\ell(y') \\
    \label{eq:boundT2ForLargeE}&\leq M |T| + |T|(|T|-1) < M|T| + |T|^2.
\end{align}
Combining (\ref{eq:boundT2ForLargeE}) with the bound $\sum_{\ell \in G}\left(e(\ell) - |E|q^{-(d-1)}\right)^2 \leq |E| \binom{d}{1}_q$ given by Lemma \ref{lem:MP} and (\ref{eq:intermediateLargeE}) completes the proof.
\end{proof}

We use Lemma \ref{lem:largeEUpperBound} to prove the following, which is slightly more general than Theorem \ref{th:largeESC}.

\begin{theorem}\label{th:largeE}
Let $E \subset \mathbb{F}_q^d$ and let $M$ be a positive integer.
Let $a= \binom{d}{1}_q|E|^{-1}$ and let $b = Mq^{1-d}$.
Let $C = (1-2b+b^2-a)^{-1}$.
If $C>0$, then
\begin{equation}\#\{y\in \mathbb{F}_q^d\colon |\pi^y(E)|\leq M\} <  C\binom{d}{1}_qM|E|^{-1}.\end{equation}
\end{theorem}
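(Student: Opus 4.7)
The plan is to combine the universal lower bound (\ref{eq:etLowerBound}), $\sum_{\ell\in L} e(\ell)t(\ell) \ge |E|\,|T|$, with the upper bound furnished by Lemma \ref{lem:largeEUpperBound}, and then solve the resulting inequality for $|T|$. Using the abbreviations $a = \binom{d}{1}_q|E|^{-1}$ and $b = Mq^{1-d}$ from the statement, chaining the two bounds yields
\[ |E|\,|T| \;<\; b\,|E|\,|T| \;+\; \sqrt{(M|T| + |T|^2)\,|E|\,\binom{d}{1}_q}, \]
where the strict inequality is inherited from the strict bound $\sum_{\ell\in L}t(\ell)^2 < M|T| + |T|^2$ recorded inside the proof of Lemma \ref{lem:largeEUpperBound} at (\ref{eq:boundT2ForLargeE}).

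From this point the argument is purely algebraic. I would move the term $b\,|E|\,|T|$ to the left, factor $|T|$ out of the radicand, square both sides, and cancel the common factor of $|T|\cdot|E|$ that appears on each side. Writing $\binom{d}{1}_q = a|E|$ then converts the inequality into $(1-b)^2\,|T| < a(M + |T|)$, which rearranges to
\[ \bigl((1-b)^2 - a\bigr)\,|T| \;<\; a\,M. \]
Since the coefficient $(1-b)^2 - a = 1 - 2b + b^2 - a = C^{-1}$ is strictly positive by hypothesis, dividing through gives $|T| < CaM = C\binom{d}{1}_q M|E|^{-1}$, precisely the claimed bound.

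The one step requiring a moment's care is the squaring, which preserves the direction of the inequality only if $1-b \ge 0$, i.e.\ $M \le q^{d-1}$. In the regime relevant for the intended application to Theorem \ref{th:largeESC} (where $M \le q^{d-1}/4$, so $b \le 1/4$) this is automatic, and in any case $C > 0$ together with $a > 0$ forces $(1-b)^2 > 0$, so $1-b$ does not vanish. Apart from this sign check, no new ideas are required: the result is exactly what the two bounds on $\sum_{\ell \in L} e(\ell)t(\ell)$ force after rearrangement, and the hypothesis $C > 0$ is precisely the minimal condition under which this algebra delivers a nontrivial inequality.
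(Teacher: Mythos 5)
Your proof is correct and follows exactly the paper's route: combine the lower bound (\ref{eq:etLowerBound}) with the upper bound of Lemma \ref{lem:largeEUpperBound}, square, substitute $\binom{d}{1}_q=a|E|$, and solve for $|T|$. The caveat you flag --- that squaring requires $1-b\ge 0$, which is not forced by $C>0$ alone --- is a genuine point that the paper's own proof silently glosses over as well, but it is harmless in the intended applications where $b\le 1/4$.
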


\begin{proof}
Combining the lower bound (\ref{eq:etLowerBound}) with the upper bound (\ref{eq:largeEUpperBound}) from Lemma \ref{lem:largeEUpperBound} leads quickly to the conclusion of the theorem:
\begin{align*}
    (1-2b+b^2)|E|^2|T|^2 &= (1-Mq^{-(d-1)})^2 |E|^2 |T|^2 \\
    &< (M|T|+|T|^2)|E|\binom{d}{1}_q \\
    &= M|T|\binom{d}{1}_q + a|E|^2|T|^2.
\end{align*}
Hence, 
\[|T| < C\binom{d}{1}_q M |E|^{-1},\]
as claimed.
\end{proof}

\begin{proof}[Proof of Theorem \ref{th:largeESC}]
Apply Theorem \ref{th:largeE} with $a=3^{-1}$ and $b=4^{-1}$.
\end{proof}

\subsection{Proof of Theorem \ref{th:largeTSC}}
To prove Theorem \ref{th:largeTSC}, we first need a refinement of Lemma \ref{lem:largeEUpperBound}. 
\begin{lemma}\label{lem:largeTUpperBound}
Let $a = |E|q^{1-d}$, let $b=M |E|^{-1}$, and let $c = \sqrt{(1-b)/a}$.
If $(1-b)/a>1$, then
\[\sum_{\ell \in L} e(\ell)t(\ell) < (b+ac)|E|\,|T| + (1-c^{-1})^{-1}|E|\sqrt{2\binom{d}{1}_q|T|}. \]
\end{lemma}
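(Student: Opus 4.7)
The plan is to refine Lemma \ref{lem:largeEUpperBound} by decomposing the line set $L$ according to whether $e(\ell)$ exceeds the threshold $ac$, thereby trading the $|T|$ scaling in its error term for the improved $\sqrt{|T|}$ scaling demanded here. This sharpening is precisely what makes the resulting bound useful when $|T| \gg |E|$, which is the regime of Theorem \ref{th:largeTSC}.

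Start, as in Lemma \ref{lem:largeEUpperBound}, from the decomposition
\[\sum_{\ell \in L} e(\ell)t(\ell) = a\sum_{\ell \in L}t(\ell) + \sum_{\ell \in L}(e(\ell) - a)t(\ell),\quad a = |E|q^{1-d},\]
with $\sum_{\ell \in L}t(\ell) \leq M|T|$ controlling the first piece. Split $L = L_h \cup L_l$ according to the condition $e(\ell) > ac$. On $L_l$ use $e(\ell) - a \leq (c-1)a$ to get $\sum_{L_l}(e(\ell) - a)t(\ell) \leq (c-1)aM|T|$; combined with the $a\sum t$ piece this yields the main contribution $(b + ac)|E||T|$ after rearranging using $aM = ab|E|$ and the identity $ac = \sqrt{a(1-b)}$. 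On $L_h$, Cauchy--Schwarz with Lemma \ref{lem:MP} gives
\[\sum_{L_h}(e(\ell)-a)t(\ell) \leq \sqrt{\binom{d}{1}_q|E|\cdot \sum_{L_h} t(\ell)^2}.\]

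The crux is bounding $\sum_{L_h}t(\ell)^2$ so as to extract the $\sqrt{|T|}$ factor. The plan is to combine the sparsity of $L_h$ (from Chebyshev applied to Lemma \ref{lem:MP}, giving $|L_h| \leq \binom{d}{1}_q|E|/((c-1)a)^2$) with the incidence identity
\[\sum_{\ell \in G} t(\ell)^2 = \binom{d}{1}_q|T| + |T|(|T|-1),\]
obtained by applying Lemma \ref{lem:MP} with the roles of the point set swapped from $E$ to $T$. Interpolating these two bounds produces $\sum_{L_h}t(\ell)^2$ of order $|E||T|$ up to constants depending on $c$; plugging this into Cauchy--Schwarz and solving the resulting inequality produces the $(1-c^{-1})^{-1}|E|\sqrt{2\binom{d}{1}_q|T|}$ error term, with the $(1-c^{-1})^{-1}$ factor arising from the algebraic cost of disentangling the $L_h$ and $L_l$ contributions (the hypothesis $(1-b)/a > 1$, equivalently $c > 1$, is exactly what ensures the threshold $ac$ lies strictly above the mean $a$ and that this factor is finite). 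The main technical obstacle is closing the interpolation on $\sum_{L_h} t(\ell)^2$: the naive pointwise bound $t(\ell) \leq q$ and the sum bound $\sum_L t(\ell) \leq M|T|$ each lose a factor that must be recovered by using the sparsity of $L_h$ in tandem with the variance identity applied to $T$.
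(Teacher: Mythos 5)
Your decomposition is by the size of $e(\ell)$ (threshold $ac$), whereas the step that actually produces the $\sqrt{|T|}$ scaling requires a threshold on $t(\ell)$. This matters for two concrete reasons. First, in the regime where the lemma is used (Theorem \ref{th:largeTSC}) we have $a=|E|q^{1-d}\le 100^{-1}$, so $ac=\sqrt{a(1-b)}<1$ and every line of $L$ satisfies $e(\ell)\ge 1>ac$; your set $L_l$ is empty and the argument collapses back to Lemma \ref{lem:largeEUpperBound}, whose error term scales like $|T|$ rather than $\sqrt{|T|}$ and is exactly what this lemma must improve upon. Second, and more fundamentally, the bound $\sum_{L_h}t(\ell)^2=O_c(|E|\,|T|)$ that your Cauchy--Schwarz step needs does not follow from the two ingredients you propose to interpolate: the cardinality bound $|L_h|\le \binom{d}{1}_q|E|/((c-1)a)^2$ is enormous when $a$ is small, and $\sum_{\ell\in G}t(\ell)^2=\binom{d}{1}_q|T|+|T|(|T|-1)$ carries a $|T|^2$ term that you have no way to remove, since no upper bound on $|T|$ in terms of $|E|$ is available a priori (deriving one is the whole point of the lemma). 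You correctly identify this as the unresolved obstacle, but it is not a technicality --- it is the missing idea.

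The paper closes this gap differently. It splits off the lines with $e(\ell)=1$ (contributing at most $M|T|=b|E|\,|T|$) and, among lines with $e(\ell)\ge 2$, thresholds on $t(\ell)$ at $c|T|q^{1-d}$, i.e.\ at $c$ times the mean of $t$ over all lines. Lines with small $t(\ell)$ contribute at most $c|T|q^{1-d}\sum_{e(\ell)\ge 2}e(\ell)<ac|E|\,|T|$, giving the main term $(b+ac)|E|\,|T|$. For the remaining lines $L'$, the lower bound $t(\ell)>c|T|q^{1-d}$ gives $t(\ell)^2\le(1-c^{-1})^{-2}(t(\ell)-|T|q^{1-d})^2$, so the variance identity (\ref{eq:eVar}) applied to $T$ yields $\sum_{L'}t(\ell)^2\le(1-c^{-1})^{-2}\binom{d}{1}_q|T|$ --- a single factor of $|T|$, which is where $\sqrt{|T|}$ comes from. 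This is paired not with $\sum(e(\ell)-a)^2\le\binom{d}{1}_q|E|$ (your choice) but with the trivial bound $\sum_{L'}e(\ell)^2<2|E|^2$, valid because each point of $E$ lies on fewer than $|E|$ lines with $e(\ell)\ge 2$; the paper explicitly flags this substitution as the key to beating the analog of (\ref{conti-2}). Cauchy--Schwarz then gives exactly $(1-c^{-1})^{-1}|E|\sqrt{2\binom{d}{1}_q|T|}$. Without a lower bound on $t(\ell)$ over the lines handled by Cauchy--Schwarz, no rearrangement of your argument can reach this error term.
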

\begin{proof}
We divide the sum $\sum_{\ell \in L} e(\ell)t(\ell)$ into three parts:
\[\sum_{\ell \in L} e(\ell)t(\ell) = \sum_{\ell:e(\ell)= 1} e(\ell)t(\ell) + \sum_{\substack{\ell: e(\ell) \geq 2, \\t(\ell) \leq c|T|q^{1-d}}} e(\ell)t(\ell) + \sum_{\substack{\ell : e(\ell) \geq 2, \\t(\ell) > c|T|q^{1-d}}} e(\ell)t(\ell).\]
Bounding the first two terms is easy:
\begin{equation}\label{eq:L1}
    \sum_{\ell:e(\ell) = 1} t(\ell) \leq \sum_{y \in T} |\pi^y(E )| \leq M|T| = b|T|\,|E|,
\end{equation}
\begin{equation}\label{eq:L2}
    \sum_{\substack{\ell: e(\ell) \geq 2, \\t(\ell) \leq c|T|q^{1-d}}} e(\ell)t(\ell) \leq c|T|q^{1-d} \sum_{\ell: e(\ell) \geq 2} e(\ell) < c|T|q^{1-d}|E|^{2} = ac |T| \, |E|.
\end{equation}

Let $L'=\{\ell \in L: e(\ell) \geq 2, t(\ell) > c|T|q^{1-d}\}$.
We have
\begin{equation}\label{eq:L'CS}
\left(\sum_{\ell \in L'} e(\ell)t(\ell)\right)^2 \leq \sum_{\ell \in L'}t(\ell)^2 \sum_{\ell \in L'}e(\ell)^2.\end{equation}

We use the lower bound on $t(\ell)$ together with Lemma \ref{lem:MP} to bound $\sum_{\ell \in L'} t(\ell)^2$.
Denote $d = (1-c^{-1})^{-2}$.
Note that 
\[d^{-1}t(\ell)^2 = (t(\ell)-c^{-1}t(\ell))^2 \leq (t(\ell)-|T|q^{1-d})^2.\]
Together with (\ref{eq:eVar}) of Lemma \ref{lem:MP}, this yields
\begin{equation}\label{eq:L'T2}
\sum_{\ell \in L'}t(\ell)^2 = d \sum_{\ell \in L'} d^{-1}t(\ell)^2 \leq d \sum_{\ell \in L'}(t(\ell) - |T|q^{1-d})^2 < (1-c^{-1})^{-2}\binom{d}{1}_q |T|.
\end{equation}

To bound $\sum_{\ell \in L'} e(\ell)^2$, we use the fact that each point of $E$ is incident to fewer than $|E|$ lines of $L'$ to infer
\begin{equation}\label{eq:L'E2}
    \sum_{\ell \in L'} e(\ell)^2  < 2|E|^2,
\end{equation}

Taken together, (\ref{eq:L'CS}), (\ref{eq:L'T2}), and (\ref{eq:L'E2}) imply
\begin{equation}\label{eq:L'}
\left(\sum_{\ell \in L'} e(\ell)t(\ell)\right)^2 \leq \sum_{\ell \in L'}t(\ell)^2 \sum_{\ell \in L'}e(\ell)^2 < 2(1-c^{-1})^{-2}\binom{d}{1}_q|T| \, |E|^2. \end{equation}

Combining (\ref{eq:L1}), (\ref{eq:L2}), and (\ref{eq:L'}) leads immediately to the claimed conclusion.
\end{proof}

We remark that a very similar proof leads to a bound analogous to \ref{conti-2}.
The key idea that leads to our stronger result is to use the trivial bound \ref{eq:L'E2} instead of Lemma \ref{lem:MP} to bound $\sum_{\ell \in L'}e(\ell)^2$.

We use Lemma \ref{lem:largeTUpperBound} to obtain the following, which is slightly stronger than Theorem \ref{th:largeTSC}.

\begin{theorem}\label{th:largeT}
Let $E \subset \mathbb{F}_q^d$ and let $M$ be a positive integer.
Let $a = |E|q^{1-d}$, and let $b=M |E|^{-1}$.
Let $c = \sqrt{(1-b)/a}$.
Then,
\[\#\{y \in \mathbb{F}_q^d: |\pi^y(E )| \leq M\} <  2(1-b-ac)^{-2}(1-c^{-1})^{-2}\binom{d}{1}_q. \]
\end{theorem}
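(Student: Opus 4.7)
The plan is to combine the basic lower bound \eqref{eq:etLowerBound} on $\sum_{\ell \in L} e(\ell)t(\ell)$ with the refined upper bound established in Lemma \ref{lem:largeTUpperBound}, and then algebraically solve the resulting inequality for $|T|$, where $T := \{y \in \mathbb{F}_q^d : |\pi^y(E)| \leq M\}$ and $L$ is the set of lines meeting both $E$ and $T$. All of the combinatorial substance has already been packaged into Lemma \ref{lem:largeTUpperBound}, so this final step is essentially a bookkeeping exercise.

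First I would invoke \eqref{eq:etLowerBound}, which gives $|E|\,|T| \leq \sum_{\ell \in L} e(\ell)t(\ell)$, together with Lemma \ref{lem:largeTUpperBound} (in the regime where $c$ is defined, i.e. $(1-b)/a > 1$), to obtain
\[|E|\,|T| < (b+ac)|E|\,|T| + (1-c^{-1})^{-1}|E|\sqrt{2\binom{d}{1}_q|T|}.\]
Next I would collect the $|E|\,|T|$ terms on the left-hand side and divide both sides through by $|E|\sqrt{|T|}$ (we may assume $|T| \geq 1$, else there is nothing to prove), yielding
\[(1-b-ac)\sqrt{|T|} < (1-c^{-1})^{-1}\sqrt{2\binom{d}{1}_q}.\]
Finally, squaring and dividing by $(1-b-ac)^2$ produces the claimed bound
\[|T| < 2(1-b-ac)^{-2}(1-c^{-1})^{-2}\binom{d}{1}_q.\]

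The main thing to watch is the sign of $1 - b - ac$, since the rearrangement requires $1 - b - ac > 0$ for the squared inequality to be meaningful. Because $ac = \sqrt{a(1-b)}$, the condition $1-b-ac > 0$ is equivalent to $1-b > a$, i.e. to $(1-b)/a > 1$, which is exactly the hypothesis that makes $c$ and Lemma \ref{lem:largeTUpperBound} useful in the first place; outside this regime the asserted bound is either vacuous or not the right shape, so no generality is lost. Beyond this small sanity check, there is no real obstacle: the proof is a two-line consequence of the lemma.
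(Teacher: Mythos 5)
Your proof is correct and follows exactly the paper's route: invoke \eqref{eq:etLowerBound} together with Lemma \ref{lem:largeTUpperBound}, cancel the $(b+ac)|E|\,|T|$ term, divide by $|E|\sqrt{|T|}$, and square. The paper leaves the rearrangement as ``easily implies''; your explicit check that $1-b-ac>0$ is equivalent to $(1-b)/a>1$ (via $ac=\sqrt{a(1-b)}$) is a worthwhile addition but does not change the argument.
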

\begin{proof}
Lemma \ref{lem:largeTUpperBound} and (\ref{eq:etLowerBound}) together imply
\[|T|\,|E| < (b+ac)|T|\,|E| + (1-c^{-1})^{-1}|E|\sqrt{2 \binom{d}{1}_q|T|}, \]
which easily implies the claimed bound on $|T|$.
\end{proof}

\begin{proof}[Proof of Theorem \ref{th:largeTSC}]
Apply Theorem \ref{th:largeT} with $a=100^{-1}$ and $b=10^{-1}$.
\end{proof}

\section{Proofs Theorems \ref{radial-real-intro} and \ref{primefields2D}}
The proofs in this section are less elementary than those in the previous sections, and rely on incidence bounds.
The two proofs are identical in outline, but the calculations are simpler over the reals.

\subsection{Over the reals}
To proceed, we need to recall the following celebrated Szemer\'{e}di-Trotter theorem and one of its consequence on the number of $k$-rich lines. 
\begin{theorem}[\cite{Szemeredi}]\label{th:SzTr}
Let $E$ be a set of points and $L$ be a set of lines in $\mathbb{R}^2$.
Then,
\begin{equation}\label{eq:SzTrIncidenceBound}
\sum_{\ell \in L}e(\ell) = O(|E|^{2/3}|L|^{2/3} + |E| + |L|).
\end{equation}
Let $L_{\ge k}$ be the set of $k$-rich lines, i.e. lines with at least $k$ points from $E$, then
\begin{equation}\label{eq:SzTrRichLineBound}
|L_{\geq k}| = O(|E|^2k^{-3} + |E|k^{-1}).
\end{equation}
\end{theorem}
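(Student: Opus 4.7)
The plan is to establish the incidence bound (\ref{eq:SzTrIncidenceBound}) via Sz\'ekely's graph-theoretic argument using the crossing number inequality, and then to derive the $k$-rich lines bound (\ref{eq:SzTrRichLineBound}) as a direct corollary.

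First I would reduce to the case in which every $\ell \in L$ contains at least two points of $E$: lines incident to at most one point of $E$ contribute at most $|L|$ to $\sum_{\ell \in L} e(\ell)$, which is absorbed into the $|L|$ term of the target bound. Writing $I = \sum_{\ell \in L} e(\ell)$ in this reduced setting, I would next construct a planar multigraph $G$ with vertex set $V(G) = E$: for each line $\ell \in L$ with points $p_1, \ldots, p_{e(\ell)}$ appearing in order along $\ell$, add the $e(\ell) - 1$ edges $p_i p_{i+1}$ drawn along the corresponding segments of $\ell$. Then $|V(G)| = |E|$ and $|E(G)| = I - |L|$. Because two such edges can cross only if they lie on distinct lines of $L$, and any two distinct lines meet in at most one point, the drawing has at most $\binom{|L|}{2} < |L|^2$ crossings.

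Now I would apply the crossing number inequality: any simple graph on $n$ vertices with $m \geq 4n$ edges requires at least $c m^3 / n^2$ crossings in any planar drawing, for an absolute constant $c > 0$. In the regime $I - |L| \geq 4|E|$, this yields $c(I - |L|)^3/|E|^2 \leq |L|^2$ and hence $I = O(|E|^{2/3}|L|^{2/3} + |L|)$. Otherwise $I - |L| < 4|E|$, giving $I = O(|E| + |L|)$. Either way, (\ref{eq:SzTrIncidenceBound}) follows. For (\ref{eq:SzTrRichLineBound}), I would apply (\ref{eq:SzTrIncidenceBound}) with $L$ replaced by $L_{\geq k}$; since each such line contains at least $k$ points of $E$,
\[k\, |L_{\geq k}| \leq \sum_{\ell \in L_{\geq k}} e(\ell) = O\bigl(|E|^{2/3} |L_{\geq k}|^{2/3} + |E| + |L_{\geq k}|\bigr).\]
For $k$ larger than a fixed absolute constant, the $|L_{\geq k}|$ term on the right is absorbed into the left-hand side; whichever of the two remaining terms dominates then gives $|L_{\geq k}| = O(|E|^2/k^3)$ or $|L_{\geq k}| = O(|E|/k)$, which is exactly the claimed bound (the case of small $k$ being trivial after adjusting the implicit constant).

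The main obstacle is the invocation of the crossing number inequality itself, which is a nontrivial classical result (proved via Euler's formula together with a probabilistic edge-deletion argument) but treated here as a black box. Beyond that, the only genuine delicacy is cleanly handling the two regimes --- crossing lemma applicable versus graph too sparse --- and carrying the additive lower-order $|E|$ and $|L|$ terms consistently through both parts of the statement.
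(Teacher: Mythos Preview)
Your argument is correct and is the standard Sz\'ekely proof of the Szemer\'edi--Trotter theorem, together with the usual derivation of the $k$-rich lines bound. However, the paper does not actually prove this statement: it is quoted as a classical result with a citation to \cite{Szemeredi} and used as a black box in the proof of Theorem~\ref{radial-real}. So there is no ``paper's own proof'' to compare against here; you have supplied a proof where the authors simply invoke the literature.
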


The following is a standard consequence, we include a proof for the reader's convenience.
\begin{lemma}\label{th:SzTrSquareBound}
There exist positive constants $c_1,c_2$ such that, for any set $E$ of points in $\mathbb{R}^2$,
\begin{equation}
    \sum_{c_1 \leq k \leq c_2|E|} k^2 |L_{=k}| < 10^{-1}|E|^2.
\end{equation}
\begin{proof}
\begin{align*}
    \sum_{c_1 \leq k \leq c_2|E|} k^2 |L_{=k}| &= \sum_{c_1 \leq k \leq c_2|E|} k^2 (|L_{\geq k}| - |L_{\geq k+1}|) \\
    &= \sum_{c_1 \leq k \leq c_2|E|} k^2 |L_{\geq k}| - \sum_{c_1 + 1 \leq k \leq c_2|E| + 1} (k-1)^2 |L_{\geq k}| \\
    &= \sum_{c_1 \leq k \leq c_2|E|} (k^2-(k-1)^2) |L_{=k}|+ c_1^2|L_{=c_1}| - (c_2|E|)^2|L_{=c_2 |E|+1}| \\
    &\leq \sum_{c_1 \leq k \leq c_2|E|-1} k O(|E|^2 k^{-3} + |E|k^{-1}) + O(c_1^{-1}|E|^2)\\
    &\leq \sum_{c_1 \leq k} O(|E|^2 k^{-2}) + \sum_{k \leq c_2|E|}O(|E|) + O(c_1^{-1}|E|^2)\\
    &\leq O(c_1^{-1}|E|^2) + O(c_2|E|^2).
\end{align*}
No matter what the constants hidden in the $O$-notation are, there are choices for $c_1$ and $c_2$ so that the right side is bounded by $10^{-1}|E|^2$, as claimed.
\end{proof}
\end{lemma}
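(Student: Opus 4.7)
The plan is to convert the sum $\sum k^2|L_{=k}|$ into a sum involving $|L_{\geq k}|$, where the Szemer\'edi-Trotter rich line bound (\ref{eq:SzTrRichLineBound}) applies directly. The natural tool is Abel (summation by parts): writing $|L_{=k}| = |L_{\geq k}| - |L_{\geq k+1}|$ and shifting indices, we obtain
\[\sum_{c_1 \leq k \leq c_2|E|} k^2 |L_{=k}| = \sum_{c_1 \leq k \leq c_2|E|} (k^2 - (k-1)^2)|L_{\geq k}| + \text{boundary terms},\]
and the weights $k^2 - (k-1)^2 = 2k-1$ are of order $k$. Hence everything reduces to bounding $\sum_{c_1 \leq k \leq c_2|E|} k \cdot |L_{\geq k}|$ plus some harmless boundary contributions at $k = c_1$ and $k = c_2|E|+1$.

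Next I plug in the Szemer\'edi-Trotter bound $|L_{\geq k}| = O(|E|^2 k^{-3} + |E| k^{-1})$. The first term contributes $O\!\left(\sum_{k \geq c_1} |E|^2 k^{-2}\right) = O(c_1^{-1}|E|^2)$, where the sum is geometric-ish and converges because the exponent exceeds $1$. The second term contributes $O\!\left(\sum_{k \leq c_2|E|} |E|\right) = O(c_2 |E|^2)$. The boundary term at $k = c_1$ gives $c_1^2 |L_{\geq c_1}| = O(c_1^{-1}|E|^2 + c_1 |E|)$, which is absorbed, and the upper boundary subtracts rather than adds. Altogether,
\[\sum_{c_1 \leq k \leq c_2|E|} k^2 |L_{=k}| \leq C_0(c_1^{-1} + c_2)|E|^2\]
for some absolute constant $C_0$ coming from Szemer\'edi-Trotter.

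Finally, one chooses $c_1$ large enough and $c_2$ small enough (independently of $E$) so that $C_0(c_1^{-1}+c_2) < 10^{-1}$; this is possible because $C_0$ is a fixed absolute constant. The only mildly delicate point is tracking that the boundary terms at $k=c_1$ and $k=c_2|E|$ truly fit within the same $O(c_1^{-1}|E|^2 + c_2|E|^2)$ envelope rather than introducing a term like $c_2^2|E|^2$ that would block the choice of $c_2$; I expect this to work out because $|L_{\geq c_2|E|+1}|$ is subtracted (and in any case is $O(c_2^{-3})$, negligible), and because the low-end boundary contribution at $k=c_1$ is $c_1^2\cdot|L_{\geq c_1}|$, which the Szemer\'edi-Trotter bound controls by $O(c_1^{-1}|E|^2)+O(c_1|E|)$. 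Thus the main (and only real) obstacle is just the bookkeeping of the Abel summation and verifying that no stray $c_2^2|E|^3$-type term appears; the combinatorial input is entirely carried by (\ref{eq:SzTrRichLineBound}).
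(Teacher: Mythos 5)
Your proposal is correct and follows essentially the same route as the paper: Abel summation to pass from $\sum k^2|L_{=k}|$ to $\sum (2k-1)|L_{\geq k}|$ plus boundary terms, then the Szemer\'edi--Trotter rich-line bound to obtain $O(c_1^{-1}|E|^2)+O(c_2|E|^2)$, with $c_1$ large and $c_2$ small. Your extra care with the boundary terms is a nice touch (the paper's displayed computation actually has a small typo there, writing $|L_{=k}|$ where $|L_{\geq k}|$ is meant), but the argument is the same.
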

\begin{theorem}\label{radial-real}
Let $c_1$ and $c_2$ be as in Lemma \ref{th:SzTrSquareBound}.
There exists a constant $0 <c_3<1$ such that the following holds.
For any finite $E \subset \mathbb{R}^2$ such that $|\ell \cap E| < c_2 |E|$ for each line $\ell$ and $M \leq c_3 |E|$, we have
\[\#\{y \in \mathbb{R}^2 : \pi^y(E) < M\} < O(M^2 E^{-1}). \]
\end{theorem}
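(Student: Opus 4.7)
I would model the proof on that of Theorem~\ref{th:largeTSC}, specifically Lemma~\ref{lem:largeTUpperBound}, with Lemma~\ref{th:SzTrSquareBound} playing the role that the finite-field variance bound (Lemma~\ref{lem:MP}) plays there.  Adopting the notation of Section~2, set $T=\{y\in\mathbb{R}^2:|\pi^y(E)|<M\}$ and let $L$ be the set of lines meeting both $E$ and $T$, with $e(\ell)=|E\cap\ell|$ and $t(\ell)=|T\cap\ell|$.  The familiar double counting (each pair $(e,y)\in E\times T$ with $e\neq y$ determining a unique line of $L$) gives the lower bound
\[ \sum_{\ell\in L} e(\ell)t(\ell) \;\geq\; |E|\,|T|, \]
so it is enough to produce a complementary upper bound of the form $c\,|E||T|+O(M^2)$ with $c<1$, after which $|T|\leq O(M^2/|E|)$ follows by rearrangement.

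\textbf{Decomposition of $L$.}  I would split $L$ according to $E$-richness into $L_{\mathrm{low}}=\{e(\ell)<c_1\}$, $L_{\mathrm{med}}=\{c_1\leq e(\ell)\leq c_2|E|\}$, and $L_{\mathrm{hi}}=\{e(\ell)>c_2|E|\}$; the last piece is empty by the hypothesis on $E$.  Since each $y\in T$ is incident to at most $M$ lines of $L$, we have $\sum_\ell t(\ell)\leq M|T|$, hence $\sum_{L_{\mathrm{low}}} e(\ell)t(\ell)\leq c_1 M|T|$, which is absorbed into $|E||T|$ once $c_3$ is chosen small enough that $c_1 M\leq |E|/2$.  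For $L_{\mathrm{med}}$, Cauchy--Schwarz combined with Lemma~\ref{th:SzTrSquareBound} yields
\[ \sum_{\ell\in L_{\mathrm{med}}} e(\ell)t(\ell) \;\leq\; \sqrt{\sum_{L_{\mathrm{med}}}e(\ell)^2}\,\sqrt{\sum_{L_{\mathrm{med}}} t(\ell)^2} \;\leq\; \frac{|E|}{\sqrt{10}}\sqrt{\sum_{L_{\mathrm{med}}}t(\ell)^2}. \]

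\textbf{Pointwise control of $t(\ell)$ via Szemer\'edi--Trotter.}  The new ingredient beyond the finite-field argument is a sharp bound on $|T\cap\ell|$ for each individual line.  For every $y\in T\cap\ell$, the at most $M$ lines of $L$ through $y$ must cover $E\setminus\ell$, which has at least $(1-c_2)|E|$ points, so some line through $y$ other than $\ell$ contains at least $(1-c_2)|E|/M$ points of $E$.  Because any line different from $\ell$ meets $\ell$ in at most one point, distinct $y$'s in $T\cap\ell$ produce distinct such rich lines, and the Szemer\'edi--Trotter rich-line estimate gives
\[ |T\cap\ell| \;\leq\; |L_{\geq(1-c_2)|E|/M}| \;=\; O(M^3/|E| + M). \]
Feeding this pointwise bound into $\sum_{L_{\mathrm{med}}}t(\ell)^2\leq(\max_\ell|T\cap\ell|)\sum_\ell t(\ell)$, together with $\sum_\ell t(\ell)\leq M|T|$, furnishes the estimate on $\sum t(\ell)^2$ needed to close the argument.

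\textbf{Main obstacle.}  The principal difficulty is extracting the factor $|E|^{-1}$ in the final bound.  A direct Cauchy--Schwarz using Lemma~\ref{th:SzTrSquareBound} together with only the trivial pair bound $\sum t(\ell)^2\leq |T|^2+M|T|$ delivers merely $|T|=O(M^2)$, which reproduces Theorem~\ref{th:3.3intro} and does not improve on it.  Recovering the extra $|E|^{-1}$ demands that the Szemer\'edi--Trotter-based pointwise bound on $|T\cap\ell|$ be coupled with the variance-like estimate of Lemma~\ref{th:SzTrSquareBound} in a carefully balanced way, with the constants $c_1,c_2,c_3$ tuned so that the $L_{\mathrm{low}}$ contribution is absorbed into the left-hand side and the prefactor produced by the medium piece is strictly smaller than one.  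Getting this bookkeeping exactly right---avoiding spurious factors of $M$ or $|E|$ creeping in through intermediate Cauchy--Schwarz steps, and handling lines on which many points of $T$ happen to lie---is where the bulk of the technical work will live.
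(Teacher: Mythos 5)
There is a genuine gap: the step that is supposed to produce the factor $|E|^{-1}$ does not actually produce it. Your pointwise bound $|T\cap\ell| = O(M^3|E|^{-1}+M)$ is correct (and is a nice observation), but feeding it into $\sum_{L_{\mathrm{med}}}t(\ell)^2\le(\max_\ell|T\cap\ell|)\sum_\ell t(\ell)\le O\bigl(M^3|E|^{-1}+M\bigr)\cdot M|T|$ and then applying Cauchy--Schwarz against $\sum e(\ell)^2\le 10^{-1}|E|^2$ gives $(1-c)^2|E|^2|T|^2 = O\bigl(|E|^2(M^4|E|^{-1}+M^2)|T|\bigr)$, i.e.\ $|T|=O(M^4|E|^{-1}+M^2)$. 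Both terms are weaker than the target $O(M^2|E|^{-1})$ throughout the range $M\le c_3|E|$ (note $M^2|E|^{-1}\le c_3 M^2$). The obstruction is structural, not a matter of bookkeeping: the lines with $t(\ell)=1$ alone can contribute up to $M|T|$ to $\sum t(\ell)^2$, and any Cauchy--Schwarz of the form $\sqrt{\sum e^2}\sqrt{\sum t^2}$ then yields at best $|E|\,|T|\lesssim|E|\sqrt{M|T|}$, i.e.\ $|T|=O(M)$, which is still weaker than $O(M^2|E|^{-1})$. So no tuning of $c_1,c_2,c_3$ rescues this decomposition.

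The paper's proof splits the rich lines ($e(\ell)\ge c_1$) according to whether $t(\ell)=1$ or $t(\ell)>1$, and treats the two classes by entirely different arguments. For $t(\ell)>1$, each $y\in T$ lies on fewer than $|T|$ such lines (every such line contains a second point of $T$), so $\sum_{t(\ell)>1}t(\ell)^2<2|T|^2$; Cauchy--Schwarz with Lemma \ref{th:SzTrSquareBound} then bounds this contribution by $5^{-1/2}|E|\,|T|$, which is absorbed into the left-hand side. For $t(\ell)=1$, Cauchy--Schwarz is abandoned entirely: there are at most $M|T|$ such lines, and the Szemer\'edi--Trotter incidence bound (\ref{eq:SzTrIncidenceBound}) applied to $E$ and this family of lines gives $\sum_{t(\ell)=1}e(\ell)=O\bigl((M|T|\,|E|)^{2/3}+M|T|+|E|\bigr)$. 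It is precisely the term $(M|T|\,|E|)^{2/3}$, compared against the lower bound $|E|\,|T|$, that delivers $|T|=O(M^2|E|^{-1})$. Your proposal is missing this direct use of the incidence bound on the $t(\ell)=1$ lines, and without it the $|E|^{-1}$ cannot be recovered.
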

\begin{proof}
As before, we proceed by bounding the sum $\sum_{\ell \in L}e(\ell)t(\ell)$ from above and below.
We use the same lower bound $\sum_{\ell \in L} e(\ell)t(\ell) \geq |E|\,|T|$ as before.
For the upper bound, we partition the sum as follows:
\begin{equation}
    \sum_{\ell \in L} e(\ell)t(\ell) \leq \sum_{\substack{\ell \in L: \\ e(\ell) < c_1}} e(\ell)t(\ell)  + \sum_{\substack{\ell \in L: \\ e(\ell) \geq c_1, t(\ell) > 1}} e(\ell)t(\ell)+\sum_{\substack{\ell \in L: \\ e(\ell) \geq c_1, t(\ell) = 1}}e(\ell)t(\ell).
\end{equation}
{\bf Bounding the first term:} This is easy, namely,
\begin{equation}\nonumber
    \sum_{\substack{\ell \in L: \\ e(\ell) < c_1}} e(\ell)t(\ell) = \sum_{y \in T} \sum_{\substack{\ell \in L:\\ y \in \ell}} e(\ell) < c_1 |M|\, |T| \leq c_1c_3|E|\,|T|.
\end{equation}

{\bf Bounding the second term:} By the Cauchy-Schwarz inequality,
\begin{equation}\label{30-e}
    \left(\sum_{\substack{\ell \in L: \\ e(\ell) \geq c_1, t(\ell) > 1}} e(\ell)t(\ell)\right)^2 \leq \sum_{\substack{\ell \in L: \\ e(\ell) \geq c_1, t(\ell) > 1}} e(\ell)^2 \sum_{\substack{\ell \in L: \\ e(\ell) \geq c_1, t(\ell) > 1}} t(\ell)^2.
\end{equation}
Lemma \ref{th:SzTrSquareBound} bounds the first sum on the right side by $10^{-1}|E|^2$.
To bound the second sum, we use the observation that each point of $T$ is incident to fewer than $T$ lines $\ell$ such that $t(\ell) > 1$:
\begin{align*}
    \sum_{\substack{\ell \in L: \\ t(\ell) > 1}}t(\ell)^2 &= \sum_{\ell \in L} \sum_{y_1 \neq y_2 \in T} \ell(y_1)\ell(y_2) + \sum_{\ell \in L} \sum_{y \in T} \ell(y)\\
    &= |T|(|T|-1) + \sum_{y \in T} \sum_{\ell \in L} \ell(y) \\
    &\leq |T|(|T|-1) + \sum_{y \in T} |T|  < 2|T|^2.
\end{align*}

Combining these leads to
\begin{equation}\nonumber
    \sum_{\substack{\ell \in L: \\ e(\ell) \geq c_1, t(\ell) > 1}} e(\ell)t(\ell) \leq 5^{-1/2}|E|\,|T|.
\end{equation}
{\bf Bounding the third term:} The key observation is that 
\[\#\{\ell \in L: t(\ell) = 1\} \leq M|T|.\]
Combined with the Szemer\'edi-Trotter theorem \ref{th:SzTr}, one has
\begin{equation}\nonumber
    \sum_{\substack{\ell \in L: \\ e(\ell) \geq c_1, t(\ell) = 1}}e(\ell)t(\ell) \leq \sum_{\substack{\ell \in L : \\ t(\ell) = 1}} e(\ell) = O((M|T|\,|E|)^{2/3} + M|T| + |E|).
\end{equation}

Combining the upper and lower bounds gives
\begin{equation}\nonumber
    |E|\,|T| \leq (c_1\,c_3 + 5^{-1/2})|E|\,|T| + O((M|E|\,|T|)^{2/3} + c_3|E|\,|T| + |E|).
\end{equation}
By choosing $c_3$ sufficiently small, we have
\begin{equation}\nonumber
    |E|\,|T| = O((M|E|\,|T|)^{2/3}),
\end{equation}
which leads directly to the desired conclusion.
\end{proof}
\subsection{Over prime fields}
In the prime field setting, we make use of a variant of the Stevens-De Zeeuw point-line incidence bound in \cite[Theorem 14]{lund}.
\bigskip
\begin{theorem}[\cite{frank}]\label{frank}
Let $P$ be a point set in $\mathbb{F}_p^2$ and $L$ be a set of lines in $\mathbb{F}_p^2$. If $|P|\le p^{8/5}$, then 
\[\sum_{\ell\in L}e(\ell)=O(|P|^{11/15}|L|^{11/15}+|P|+|L|).\]
Let $L_{\ge k}$ be the set of $k$-rich lines, i.e. lines with at least $k$ points from $P$, then 
\[|L_{\ge k}|=O(|P|^{11/4}k^{-\frac{15}{4}}+|P|k^{-1}).\]
\end{theorem}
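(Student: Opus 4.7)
The plan is to follow the argument of Stevens and de Zeeuw, whose engine is Rudnev's point--plane incidence theorem in $\mathbb{F}_p^3$: for sets $\mathcal{P}$ of points and $\Pi$ of planes with $|\mathcal{P}| \leq |\Pi|$ and $|\mathcal{P}| \leq p^2$, if no line of $\mathbb{F}_p^3$ lies on more than $K$ planes of $\Pi$, then $I(\mathcal{P},\Pi) = O\bigl(|\mathcal{P}|^{1/2}|\Pi| + K|\mathcal{P}|\bigr)$. A standard dyadic pigeonhole reduces the two displayed conclusions to each other up to constants: the rich-lines bound $|L_{\geq k}| = O(|P|^{11/4}k^{-15/4} + |P|k^{-1})$ for every $k$ recovers the incidence bound by summing $k\cdot |L_{=k}|$ over dyadic ranges of $k$, while the converse direction is immediate. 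I therefore focus entirely on the rich-lines estimate, and after a further pigeonhole may assume every line of $L$ meets $P$ in exactly $k$ points.

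The core step is to count the ordered collinear triples $(p_1, p_2, p_3) \in P^3$ lying on a common line of $L_{\geq k}$ in two ways. From below, every such line contributes $\gtrsim k^3$ triples, so the count is at least a constant multiple of $|L_{\geq k}|\, k^3$. From above, encode collinearity as a point--plane incidence problem in $\mathbb{F}_p^3$: after a generic affine change of coordinates so that all relevant lines are non-vertical, three points $(x_i, y_i) \in \mathbb{F}_p^2$ are collinear exactly when a bilinear relation in their coordinates vanishes, and this relation can be rewritten as the statement that a suitable lifted point, determined by $p_1$ and $p_2$, lies on a suitable plane, determined by $p_3$. With the point set and plane family sized appropriately from $P$ and the $k$-rich lines, Rudnev's theorem yields
\[|L_{\geq k}|\, k^3 \;=\; O\bigl(|P|^{11/4}\, k^{-3/4} + |P|\, k\bigr),\]
which rearranges to the claimed bound on $|L_{\geq k}|$. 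The hypothesis $|P| \leq p^{8/5}$ is precisely the condition that keeps the sizes of the lifted point and plane sets below $p^2$, placing Rudnev's theorem in its effective regime.

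The main obstacle is the combinatorial encoding: (i) designing the lift so that each collinear triple in $\mathbb{F}_p^2$ corresponds to a genuine incidence, with controlled multiplicity, in the lifted configuration in $\mathbb{F}_p^3$; (ii) balancing the sizes of the lifted point and plane families so that the main term $|\mathcal{P}|^{1/2}|\Pi|$ in Rudnev produces exactly the exponents $11/4$ and $15/4$; and (iii) verifying the non-degeneracy hypothesis of Rudnev, i.e.\ that no line in $\mathbb{F}_p^3$ lies on more than $O(k)$ of the constructed planes, which amounts to ruling out configurations in which many lines of $L$ share too much projective structure in $\mathbb{F}_p^2$. Once this bookkeeping is carried out as in Stevens--de Zeeuw's paper, both conclusions of the theorem drop out together: the second follows directly from Rudnev applied to the lifted configuration, and the first from the dyadic summation in the previous paragraph.
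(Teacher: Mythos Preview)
The paper does not prove this statement at all: Theorem~\ref{frank} is quoted as a black-box input, attributed to Stevens--de Zeeuw \cite{frank} with the specific symmetric variant taken from \cite[Theorem~14]{lund}, and is then applied in the proof of Theorem~\ref{primefields2D}. There is therefore no ``paper's own proof'' to compare your proposal against.

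Your sketch is a faithful high-level summary of how the Stevens--de Zeeuw bound is actually established in \cite{frank}: one bounds $k$-rich lines by counting collinear triples, encodes collinearity as a point--plane incidence in $\mathbb{F}_p^3$, and invokes Rudnev's theorem; the incidence bound then follows by dyadic summation. A couple of the intermediate displays in your sketch are not literally what the computation produces (for instance, the second term you write in the triple-count bound does not rearrange to $|P|k^{-1}$ but to something smaller, and the balancing of the lifted point and plane sets that yields the exponents $11/15$ and $11/4$ is not spelled out), but these are details of the original paper rather than gaps in the strategy. If you intend to include this as a self-contained argument rather than a citation, you would need to make the lift explicit and check the non-degeneracy hypothesis of Rudnev carefully; otherwise, citing \cite{frank} and \cite{lund} as the present paper does is entirely appropriate.
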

We also have an analog of Lemma \ref{th:SzTrSquareBound}.

\begin{lemma}\label{th:SzTrSquareBound-ff}
There exists a positive constant $c$ such that, for any set $E$ of points in $\mathbb{R}^2$,
\begin{equation}
    \sum_{c^{-4/7}|E|^{3/7} \leq k \leq c|E|} k^2 |L_{=k}| < 10^{-1}|E|^2.
\end{equation}
\begin{proof}
Set $c_1:=c^{-4/7}|E|^{3/7}$. 
\begin{align*}
    \sum_{c_1 \leq k \leq c|E|} k^2 |L_{=k}| &= \sum_{c_1 \leq k \leq c|E|} k^2 (|L_{\geq k}| - |L_{\geq k+1}|) \\
    &= \sum_{c_1 \leq k \leq c|E|} k^2 |L_{\geq k}| - \sum_{c_1 + 1 \leq k \leq c|E| + 1} (k-1)^2 |L_{\geq k}| \\
    &= \sum_{c_1 \leq k \leq c|E|} (k^2-(k-1)^2) |L_{=k}|+ c_1|L_{=c_1}| - (c|E|)^2|L_{=c |E|+1}| \\
    &\leq \sum_{c_1 \leq k \leq c|E|-1} k O(|E|^{11/4} k^{-15/4} + |E|k^{-1}) + O(c_1^{-7/4}|E|^{11/4})+O(c_1|E|)\\
    &\leq \sum_{c_1 \leq k} O(|E|^{11/4} k^{-7/4}) + \sum_{k \leq c_2|E|}O(|E|) + O(c_1^{-1}|E|^2)\\
    &\leq O(c_1^{-7/4}|E|^{11/4})+O(c_1|E|) + O(c|E|^2).
\end{align*}
So we can choose $c$ small enough, so that the right side is bounded by $10^{-1}|E|^2$.
\end{proof}
\end{lemma}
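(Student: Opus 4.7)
The plan is to mimic the proof of Lemma \ref{th:SzTrSquareBound}, but substituting the Stevens--de Zeeuw rich--lines bound (the second half of Theorem \ref{frank}) in place of Szemer\'edi--Trotter, and then choosing the lower endpoint $c_1 = c^{-4/7}|E|^{3/7}$ so that the dominant term $|E|^{11/4}c_1^{-7/4}$ collapses to $O(c|E|^2)$.

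First I would apply discrete summation by parts: writing $|L_{=k}| = |L_{\ge k}| - |L_{\ge k+1}|$ and telescoping yields
\[
\sum_{c_1\le k\le c|E|} k^2 |L_{=k}| = c_1^2|L_{\ge c_1}| - (c|E|)^2|L_{\ge c|E|+1}| + \sum_{c_1< k\le c|E|} (2k-1)\,|L_{\ge k}|.
\]
Dropping the negative middle term and bounding $2k-1\le 2k$, the right-hand side is at most $c_1^2|L_{\ge c_1}| + 2\sum_{c_1\le k\le c|E|} k\,|L_{\ge k}|$. Next I would insert the rich-line bound from Theorem \ref{frank}, namely $|L_{\ge k}| = O(|E|^{11/4}k^{-15/4} + |E|k^{-1})$, into each occurrence of $|L_{\ge k}|$.

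The heart of the calculation is the main sum, which splits as
\[
\sum_{c_1\le k\le c|E|} k\cdot O\!\left(|E|^{11/4}k^{-15/4}\right) + \sum_{c_1\le k\le c|E|} k\cdot O\!\left(|E|k^{-1}\right) = O\!\left(|E|^{11/4}\sum_{k\ge c_1} k^{-11/4}\right) + O(c|E|^2).
\]
The first term is $O(|E|^{11/4}c_1^{-7/4})$, and the choice $c_1 = c^{-4/7}|E|^{3/7}$ gives $c_1^{-7/4} = c|E|^{-3/4}$, so this becomes $O(c|E|^2)$. For the boundary contribution, $c_1^2|L_{\ge c_1}| = O(|E|^{11/4}c_1^{-7/4}) + O(c_1|E|) = O(c|E|^2) + O(c^{-4/7}|E|^{10/7})$, and the trailing $|E|^{10/7}$ is absorbed into $c|E|^2$ for $|E|$ sufficiently large (say, once $|E|\ge c^{-11/4}$, which is a mild condition consistent with the regime of Theorem \ref{primefields2D}).

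Collecting everything, the entire sum is bounded by an absolute constant times $c|E|^2$, so picking $c$ small enough makes this strictly smaller than $10^{-1}|E|^2$. The main technical obstacle is calibrating $c_1$: the Stevens--de Zeeuw exponents force $c_1$ to grow like a fractional power of $|E|$ (unlike the real case, where $c_1$ is a constant), and one has to check both that the resulting boundary term $c_1|E|$ is still negligible and that the lower endpoint $c_1$ is indeed less than the upper endpoint $c|E|$, both of which hold in the regime $|E|\ll p^{8/5}$ that governs the hypothesis of Theorem \ref{frank}.
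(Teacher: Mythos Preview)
Your proposal is correct and follows essentially the same route as the paper: Abel summation against $|L_{\ge k}|$, insertion of the Stevens--de Zeeuw rich-line bound $|L_{\ge k}| = O(|E|^{11/4}k^{-15/4}+|E|k^{-1})$, and the calibration $c_1=c^{-4/7}|E|^{3/7}$ so that the dominant term $|E|^{11/4}c_1^{-7/4}$ collapses to $O(c|E|^2)$. Your treatment of the boundary term $O(c_1|E|)=O(c^{-4/7}|E|^{10/7})$ is in fact slightly more careful than the paper's, since you note explicitly that it is absorbed only once $|E|\gtrsim c^{-11/4}$, a harmless restriction in the regime where the lemma is applied.
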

\begin{proof}[Proof of Theorem \ref{primefields2D}]
Our argument is similar to that over the reals, for the sake of completeness, we reproduce all details here.

As above, we want to bound the sum $\sum_{\ell \in L}e(\ell)t(\ell)$ from above and below.
We have $\sum_{\ell \in L} e(\ell)t(\ell) \geq |E|\,|T|$.
For the upper bound, we partition the sum as follows:

\begin{equation}
    \sum_{\ell \in L} e(\ell)t(\ell) \leq \sum_{\substack{\ell \in L: \\ e(\ell) < c_1}} e(\ell)t(\ell)  + \sum_{\substack{\ell \in L: \\ e(\ell) \geq c_1, t(\ell) > 1}} e(\ell)t(\ell)+\sum_{\substack{\ell \in L: \\ e(\ell) \geq c_1, t(\ell) = 1}}e(\ell)t(\ell),
\end{equation}
where $c_1:=c^{-4/7}|E|^{3/7}$. 

{\bf Bounding the first term:} This is easy, namely,
\begin{equation}\nonumber
    \sum_{\substack{\ell \in L: \\ e(\ell) < c_1}} e(\ell)t(\ell) = \sum_{y \in T} \sum_{\substack{\ell \in L:\\ y \in \ell}} e(\ell) < c_1 |M|\, |T| \leq c|E|\,|T|.
\end{equation}

{\bf Bounding the second term:} By the Cauchy-Schwarz inequality,
\begin{equation}\label{30-eee}
    \left(\sum_{\substack{\ell \in L: \\ e(\ell) \geq c_1, t(\ell) > 1}} e(\ell)t(\ell)\right)^2 \leq \sum_{\substack{\ell \in L: \\ e(\ell) \geq c_1, t(\ell) > 1}} e(\ell)^2 \sum_{\substack{\ell \in L: \\ e(\ell) \geq c_1, t(\ell) > 1}} t(\ell)^2.
\end{equation}
Lemma \ref{th:SzTrSquareBound-ff} bounds the first sum on the right side by $10^{-1}|E|^2$.
To bound the second sum, we use the observation that each point of $T$ is incident to fewer than $T$ lines $\ell$ such that $t(\ell) > 1$:
\begin{align*}
    \sum_{\substack{\ell \in L: \\ t(\ell) > 1}}t(\ell)^2 &= \sum_{\ell \in L} \sum_{y_1 \neq y_2 \in T} \ell(y_1)\ell(y_2) + \sum_{\ell \in L} \sum_{y \in T} \ell(y)\\
    &= |T|(|T|-1) + \sum_{y \in T} \sum_{\ell \in L} \ell(y) \\
    &\leq |T|(|T|-1) + \sum_{y \in T} |T|  < 2|T|^2.
\end{align*}

Combining these leads to
\begin{equation}\nonumber
    \sum_{\substack{\ell \in L: \\ e(\ell) \geq c_1, t(\ell) > 1}} e(\ell)t(\ell) \leq 5^{-1/2}|E|\,|T|.
\end{equation}
{\bf Bounding the third term:} The key observation is that 
\[\#\{\ell \in L: t(\ell) = 1\} \leq M|T|.\]
Combined with the Theorem \ref{frank}, one has
\begin{equation}\nonumber
    \sum_{\substack{\ell \in L: \\ e(\ell) \geq c_1, t(\ell) = 1}}e(\ell)t(\ell) \leq \sum_{\substack{\ell \in L : \\ t(\ell) = 1}} e(\ell) = O((M|T|\,|E|)^{11/15} + M|T| + |E|)=O\left((M|T|\,|E|)^{11/15}+|E|\right)+\frac{|T|\,|E|}{10}.
\end{equation}

Combining the upper and lower bounds gives
\begin{equation}\nonumber
    |E|\,|T| \leq (c + 5^{-1/2})|E|\,|T| + O\left((M|T|\,|E|)^{11/15}+|E|\right)+\frac{|T|\,|E|}{10}.
\end{equation}
By choosing $c$ sufficiently small, we have
\begin{equation}\nonumber
    |E|\,|T| = O((M|E|\,|T|)^{11/15}),
\end{equation}
which leads directly to the desired conclusion.

\end{proof}

\nonumber
\section{Acknowledgements}
We would like to thank Bochen Liu and Tuomas Orponen for many discussions about the results in the continuous setting. 

B. Lund was supported by the Institute for Basic Science (IBS-R029-C1). T. Pham would like to thank to the VIASM for the hospitality and for the excellent working condition. 
 

 \bibliographystyle{amsplain}

\end{document}